\newtheorem{theorem}{Theorem}[section]
\newtheorem{lemma}[theorem]{Lemma}
\newtheorem{corollary}[theorem]{Corollary}
\newtheorem{remark}[theorem]{Remark}
\newtheorem{definition}[theorem]{Definition}
\DeclareMathOperator{\inner}{int}
\DeclareMathOperator{\conv}{conv}
\DeclareMathOperator{\aff}{aff}
\DeclareMathOperator{\vol}{vol}
\def\R{\mathbb{R}}
\def\Z{\mathbb{Z}}
\def\Q{\mathbb{Q}}
\def\G{\mathrm{G}}
\def\Grat{\mathrm{Q}}
\def\d{\mathrm{d}}
\def\ind{\d}
\def\rd{\mathrm{q}}
\def\rind{\rd}
\newcommand{\mybinom}[2]{\left(\genfrac{}{}{0pt}{0}{#1}{#2}\right)}
\newcommand{\gauss}[1]{\left\lfloor #1\right\rfloor}
\newcommand{\lcm}[1]{\mathrm{lcm}\left(#1\right)}
\newcommand{\rat}[1]{\left\{#1\right\}}
\numberwithin{equation}{section}
\title{Rational Ehrhart quasi-polynomials}
\author{Eva Linke}
\address{Eva Linke, Fakult\"at f\"ur Mathematik, Universit\"at Mag\-deburg, Universit\"atsplatz 2, D-39106-Magdeburg,
Germany} 
\thanks{Supported by the Deutsche Forschungsgemeinschaft within the
project He 2272/4-1.}
\begin{document}

\begin{abstract}%{{{
Ehrhart's famous theorem states
that the number of integral points in a rational polytope is a quasi-polynomial in the integral dilation
factor. We study the case of rational dilation factors. It turns out that the number of integral points
can still be written as a rational quasi-polynomial. Furthermore, the coefficients of this rational 
quasi-polynomial are piecewise polynomial functions and related to each other by derivation.%}}}
\end{abstract}

\maketitle
\section{Introduction}%{{{
Let $\R^n$ be the $n$-dimensional Euclidean space and let $\Z^n$ be the integral lattice. 
For a set $M\subset\R^n$, we denote by $\inner(M)$ its interior, by $\conv(M)$ its convex hull, by
$\aff(M)$ its affine hull, by $\vol(M)$ its volume, which is the usual Lebesgue measure of $M$, and by
$\dim(M)$ its dimension, which is defined as the dimension of its affine hull. 
By $\vol_{\dim(M)}(M)$ we denote the $\dim(M)$-dimensional volume of $M$.
A polytope
is called integral (rational), if all its vertices have integral (rational) coordinates. For a rational polytope $P$, 
we denote by $\d(P)$ the denominator of $P$, that is, 
the smallest number $k\in\Z_{>0}$ such that $kP$ is an integral polytope. In other words
$\d(P)$ is the lowest common multiple of the denominators of all coordinates of the vertices of $P$.
Furthermore, let the $i$-index $\ind_i(P)$ of a rational polytope $P$ be the smallest number $k\in\Z_{\geq 0}$ 
such that for each $i$-face $F$ of $P$ the affine space $k\aff(F)$ contains integral points.\\
A function $p:\Z\to\Z$ is called a \emph{quasi-polynomial with period $d$} if there exist periodic functions 
$p_i:\Z_{\geq 0}\to\Z$ with period $d$ such that $p(k)=\sum_{i=0}^np_i(k)k^i$.
Ehrhart's Theorem states the following:

\begin{theorem}[Ehrhart, 1962, \cite{Ehrhart1962}, McMullen, 1978, \cite{McMullen1978}]\label{ehr_quasipoly}%{{{
  Let $P\subset\R^n$ be a rational polytope. Then
  \[\G(P,k):=\#(kP\cap\Z^n)=\sum_{i=0}^{\dim(P)}\G_i(P,k)k^i\ \text{ for }k\in\Z_{\geq 0}.\]
  $\G(P,\cdot):\Z_{\geq 0}\to\Z_{\geq 0}$ is called the \emph{Ehrhart quasi-polynomial} of $P$.
  For every $i$, the coefficient $\G_i(P,k)$ depends only on the congruence class of $k$ modulo $\ind_i(P)$.
	%}}}
\end{theorem}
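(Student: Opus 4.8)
The plan is to proceed in two stages: first show that $\G(P,\cdot)$ is a quasi-polynomial of degree $\dim(P)$ all of whose coefficients have period dividing $\d(P)$, and then refine the period of each coefficient to $\ind_i(P)$.

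For the first stage I would induct on $\dim(P)$, reducing to a simplex by triangulating $P$ using no new vertices and applying inclusion--exclusion over the faces of the triangulation: every such face is a rational polytope with vertices among those of $P$, so the lower-dimensional overlaps are quasi-polynomials (of period dividing $\d(P)$) by the induction hypothesis, and a signed sum of quasi-polynomials is again one. For a rational $d$-simplex $\Delta=\conv(v_0,\dots,v_d)$ I would pass to $C=\mathrm{cone}\bigl((v_0,1),\dots,(v_d,1)\bigr)\subset\R^{n+1}$, whose lattice points in the hyperplane $x_{n+1}=k$ correspond to the points of $k\Delta\cap\Z^n$. Using the integral ray generators $g_j=(\d(v_j)v_j,\d(v_j))$, every lattice point of $C$ is uniquely a lattice point of the half-open parallelepiped $\Pi=\{\sum_j\nu_jg_j:0\le\nu_j<1\}$ plus a nonnegative integral combination of the $g_j$; recording the last coordinate gives
\[
\sum_{k\ge0}\G(\Delta,k)\,t^k=\frac{h_\Delta(t)}{\prod_{j=0}^d\bigl(1-t^{\d(v_j)}\bigr)},\qquad h_\Delta(t)=\sum_{p\in\Pi\cap\Z^{n+1}}t^{p_{n+1}}\in\Z_{\ge0}[t],\quad\deg h_\Delta<\sum_j\d(v_j).
\]
Since $\d(v_j)\mid\d(\Delta)$, clearing to the denominator $(1-t^{\d(\Delta)})^{d+1}$ and a partial-fraction expansion over the $\d(\Delta)$-th roots of unity show that $\G(\Delta,k)$ is a quasi-polynomial in $k$ of degree at most $d$ and period dividing $\d(\Delta)$; the degree is exactly $d$, since on any residue class along which $\G(\Delta,\cdot)$ is not eventually zero it grows like $k^{d}$. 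Reassembling the triangulation proves the statement for $P$ with common period $\d(P)$. This already settles the cases $i=0$ (where $\ind_0(P)=\d(P)$) and $i=\dim(P)$ (where the only $(\dim P)$-face is $P$, and $\G_{\dim P}(P,\cdot)$ is a constant).

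For the second stage, fix $0<i<\dim(P)$ and use the face decomposition $kP=\bigsqcup_{F}\mathrm{relint}(kF)$, so that $\G(P,k)=\sum_{F}\G(\mathrm{relint}\,F,k)$, and induct on $\dim(P)$. A face $F$ with $\dim F<i$ contributes a quasi-polynomial of degree $<i$ and hence nothing to $\G_i$. A face $F$ with $\dim F=i$ has $\mathrm{relint}(kF)\cap\Z^n=\emptyset$ unless $\ind_i(F)\mid k$; when $\ind_i(F)\mid k$, translating $kF$ by a suitable lattice point of $k\aff(F)$ identifies $\mathrm{relint}(kF)\cap\Z^n$ with the lattice points of $(k/\ind_i(F))$ times a fixed rational polytope inside the lattice $\aff(F)\cap\Z^n$, whose leading Ehrhart coefficient is a constant; so $F$ contributes to $\G_i$ a function that is constant on $\ind_i(F)\Z$ and vanishes elsewhere, of period $\ind_i(F)$, and $\ind_i(F)\mid\ind_i(P)$ because every $i$-face of $F$ is an $i$-face of $P$. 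A face $F$ with $i<\dim F<\dim P$ is handled by Ehrhart--Macdonald reciprocity $\G(\mathrm{relint}\,F,k)=(-1)^{\dim F}\G(F,-k)$, itself a consequence of the Stage-one generating function under $t\mapsto t^{-1}$: the $k^i$-coefficient of $\G(\mathrm{relint}\,F,\cdot)$ is, up to sign, the $k^i$-coefficient of $\G(F,\cdot)$ evaluated at $-k$, of period dividing $\ind_i(F)\mid\ind_i(P)$ by the induction hypothesis applied to $F$.

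The only term not covered is $F=P$, and this is where I expect the genuine difficulty. Reciprocity merely yields $\G_i(\mathrm{relint}\,P,k)=(-1)^{\dim P-i}\G_i(P,-k)$, so the argument above only shows that $\G_i(P,k)-(-1)^{\dim P-i}\G_i(P,-k)$ has period dividing $\ind_i(P)$, which does not by itself force $\G_i(P,\cdot)$ to have that period. Closing this gap requires making explicit the cancellation of the ``non-$\ind_i(P)$-periodic'' parts of the various $\G(\mathrm{relint}\,F,\cdot)$ against one another --- for instance by refining the decomposition, triangulating $\inner(P)$ and grouping the simplices according to the rational affine lattices their faces span, or, more conceptually, by recasting the argument in McMullen's framework of translation-invariant lattice-point valuations, in which $\G_i(P,\cdot)$ is a sum of contributions indexed by the $i$-faces $F$, each depending on $k$ only through the position of $k\aff(F)$ modulo $\Z^n$ and therefore only through $k$ modulo $\ind_i(F)$. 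I expect this cancellation/valuation step to be the crux.
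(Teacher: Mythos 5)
The paper does not prove this statement: it is quoted as background from Ehrhart and McMullen and used as a black box, so there is no internal proof to compare yours against, and I can only judge your argument on its own terms. Your Stage 1 is a correct rendering of the standard proof of Ehrhart's half of the theorem (triangulation with no new vertices, coning over a simplex, the half-open parallelepiped decomposition, and partial fractions over roots of unity), and it does establish that $\G(P,\cdot)$ is a quasi-polynomial of degree $\dim(P)$ whose coefficients all have period dividing $\d(P)=\ind_0(P)$. One small inaccuracy: for non-full-dimensional $P$ the leading coefficient $\G_{\dim(P)}(P,\cdot)$ is not a constant; it equals the (suitably normalized) relative volume when $k\aff(P)$ meets $\Z^n$ and $0$ otherwise, so it is periodic with period $\ind_{\dim(P)}(P)$ rather than constant --- the conclusion you want still holds, but needs that extra sentence.

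The genuine gap is the one you yourself flag, and it is not a cosmetic one: the refined period $\ind_i(P)$ for $0<i<\dim(P)$ is exactly McMullen's contribution, and it is the part of the theorem this paper actually leans on (it is what makes $\ind_i(P)$ a period of $\Grat_i(P,\cdot)$ in Theorem~\ref{thm:rat_dilations} and Corollary~\ref{cor:periods}). Your face decomposition plus reciprocity correctly disposes of every proper face, but for $F=P$ it only shows that $\G_i(P,k)-(-1)^{\dim(P)-i}\G_i(P,-k)$ has period dividing $\ind_i(P)$; any summand of $\G_i(P,\cdot)$ that is invariant under $k\mapsto -k$ up to that sign cancels from this difference, so the period of $\G_i(P,\cdot)$ itself is not pinned down. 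Re-triangulating $\inner(P)$ does not help, because the top-dimensional cells always regenerate the same self-referential term. The missing input is McMullen's translation-invariant (lattice-invariant) valuation machinery: writing $k=k_0+m\,\d(P)$ so that $kP=k_0P+m\,(\d(P)P)$ with $\d(P)P$ integral, one shows that $m\mapsto\G(k_0P+mQ)$ is a polynomial for integral $Q$ and that the resulting expansion of $\G_i(P,\cdot)$ is a sum over $i$-faces $F$ of terms depending on $k$ only through the class of $k\aff(F)$ modulo $\Z^n$, hence only on $k$ modulo $\ind_i(F)$. Establishing that expansion is the real content of the second sentence of the theorem, and your proposal identifies but does not supply it; as written, the proof is incomplete.
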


Here, $\G_i(P,\cdot):\Z_{\geq 0}\to\Q$ is a periodic function and $\ind_i(P)$ is a period of $\G_i(P,\cdot)$.
The second part of this statement is due to McMullen. Furthermore, $\G_0(P,0)=1$, and $\G_{\dim(P)}(P,k)=\vol_{\dim(P)}(P)$ for all 
$k\in\Z_{\geq 0}$ such that $k\aff(P)$ contains integer points.  

For an introduction into Ehrhart theory we refer to
Beck and Robins \cite{Beck&Robins2006}.
Unfortunately, $\ind_i(P)$ is not necessarily the minimal period of $\G_i(P,\cdot)$, that is, there is
possibly a smaller integer number $p<\ind_i(P)$ such that $p$ is a period of $\G_i(P,\cdot)$. 
This phenomenon is called period collapse and has been subject to active research in the last years.
McAllister and Woods \cite{McAllister&Woods} studied the $1$- and $2$-dimensional case with the result that period collapse
does not occur in dimension $1$, and they gave a characterization of those rational polygons in dimension $2$
whose Ehrhart quasi-polynomial is a polynomial. They also showed that the minimal periods of $\G_i(P,\cdot)$ are not
necessarily decreasing with $i$.
Woods \cite{Woods2004} gave a polynomial-time algorithm in fixed dimension
which decides whether a given integer $p$ is a period of all $\G_i(P,\cdot)$. In \cite{Beck&Sam&Woods}, Beck, Sam and Woods constructed
polytopes with no period collapse at all. Furthermore, they showed that period collapse never occurs for $\G_{\dim(P)-1}(P,\cdot)$.
Haase and McAllister \cite{Haase&McAllister} gave a conjectural explanation of period collapse involving splitting the polytope into
pieces and applying unimodular transformations onto these pieces.

We show that the Ehrhart quasi-polynomial can be generalized to a rational quasi-polynomial by allowing rational dilation factors,
where a \emph{rational quasi-polynomial with period $d$} is a function $p:\Q\to\Q$ of the 
form $p(r)=\sum_{i=0}^np_i(r)r^i$ where $p_i:\Q\to\Q$ is periodic with period $d$.

\begin{theorem}\label{thm:rat_dilations}%{{{
  Let $P\subset\R^n$ be a rational polytope. Then for $r\in\Q_{\geq 0}$
  \[\Grat(P,r):=\#(rP\cap\Z^n)=\sum_{i=0}^{\dim(P)}\Grat_i(P,r)r^i.\]
  Here, $\Grat_i(P,\cdot):\Q_{\geq 0}\to\Q$ is a periodic function, and $\ind_i(P)$ is a period of $\Grat_i(P,\cdot)$.
	We call $\Grat(P,\cdot):\Q_{\geq 0}\to\Q$ the \emph{rational Ehrhart quasi-polynomial} of $P$. 
	%}}}
\end{theorem}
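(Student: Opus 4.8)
The plan is to reduce Theorem \ref{thm:rat_dilations} to Ehrhart's theorem (Theorem \ref{ehr_quasipoly}) applied to rescaled copies of $P$. Given $r\in\Q_{\geq 0}$, write $r=p/b$ in lowest terms (so $b\in\Z_{>0}$, $p\in\Z_{\geq 0}$, $\gcd(p,b)=1$; for $r=0$ take $p=0$, $b=1$) and set $P_b:=\tfrac1b P$, a rational polytope with $\dim P_b=\dim P$ whose $i$-faces are exactly the sets $\tfrac1b F$ for $i$-faces $F$ of $P$. The homogeneity identity $rP=\tfrac pb P=p\cdot P_b$ gives
\[\Grat(P,r)=\#(rP\cap\Z^n)=\#(p\,P_b\cap\Z^n)=\G(P_b,p),\]
and feeding the right-hand side into Theorem \ref{ehr_quasipoly} and substituting $p=br$ yields
\[\Grat(P,r)=\sum_{i=0}^{\dim P}\G_i(P_b,p)\,p^i=\sum_{i=0}^{\dim P}b^i\,\G_i(P_b,br)\,r^i.\]
This suggests defining $\Grat_i(P,r):=b^i\,\G_i(P_b,br)$, where $b$ denotes the reduced denominator of $r$; this is a well-defined function $\Q_{\geq 0}\to\Q$, and the displayed formula is exactly the asserted expansion. (For $r\in\Z_{\geq 0}$ one has $b=1$, so $\Grat_i(P,\cdot)$ restricts to $\G_i(P,\cdot)$.) What remains is to show that $\ind_i(P)$ is a period of $\Grat_i(P,\cdot)$.

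For this I would first isolate an arithmetic fact about a single rational affine subspace $A=w+V\subseteq\R^n$ (with $w\in\Q^n$ and $V$ a rational linear subspace): since $kA\cap\Z^n\neq\emptyset$ if and only if $kw\in\Z^n+V$, the set $\{k\in\Z_{>0}:kA\cap\Z^n\neq\emptyset\}$ is precisely the set of positive multiples of a single integer, namely the (finite) order of the image of $w$ in the group $\R^n/(\Z^n+V)$; moreover the order of the image of $w/b$ divides $b$ times the order of the image of $w$. Taking the least common multiple over all $i$-faces $F$ of $P$ (note that $\aff(\tfrac1b F)=\tfrac1b w_F+V_F$ has the same direction space $V_F$ as $\aff(F)=w_F+V_F$), it follows that $\{k\in\Z_{>0}: k\aff(F)\cap\Z^n\neq\emptyset\text{ for every }i\text{-face }F\}$ is the set of positive multiples of $\ind_i(P)$, and that $\ind_i(P_b)\mid b\,\ind_i(P)$. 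In particular $b\,\ind_i(P)$ is a period of $\G_i(P_b,\cdot)$, because $\ind_i(P_b)$ is one by Theorem \ref{ehr_quasipoly} (McMullen's part) and any multiple of a period is a period.

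With this in hand the periodicity is immediate. Fix $r=p/b\in\Q_{\geq 0}$ in lowest terms; since $\ind_i(P)\in\Z$, the number $r+\ind_i(P)=(p+b\,\ind_i(P))/b$ is again in lowest terms with the same denominator $b$, whence
\[\Grat_i\bigl(P,r+\ind_i(P)\bigr)=b^i\,\G_i\bigl(P_b,\,br+b\,\ind_i(P)\bigr)=b^i\,\G_i(P_b,br)=\Grat_i(P,r),\]
using that $b\,\ind_i(P)$ is a period of $\G_i(P_b,\cdot)$. Hence $\ind_i(P)$ is a period of $\Grat_i(P,\cdot)$, and the proof is complete.

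The only step requiring care is the bookkeeping of periods under the rescaling $P\mapsto P_b$ — concretely the divisibility $\ind_i(P_b)\mid b\,\ind_i(P)$ and the structural description of the valid dilations of a rational flat; everything else is a routine reinterpretation of Theorem \ref{ehr_quasipoly}. It is precisely the passage to the \emph{reduced} denominator of $r$ that makes the coefficient functions well-defined on all of $\Q_{\geq 0}$ and simultaneously makes the period $\ind_i(P)$ visible, since adding an integer to $r$ leaves its reduced denominator unchanged.
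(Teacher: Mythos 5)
Your proposal is correct and follows essentially the same route as the paper: both define $\Grat_i\left(P,\frac{a}{b}\right)=\G_i\left(\frac{1}{b}P,a\right)b^i$ and deduce periodicity from the divisibility $\ind_i\left(\frac{1}{b}P\right)\mid b\,\ind_i(P)$ together with McMullen's part of Theorem \ref{ehr_quasipoly}. The only cosmetic difference is that you fix the reduced representation of $r$ (making well-definedness automatic and requiring the observation that adding $\ind_i(P)$ preserves the reduced denominator), whereas the paper allows arbitrary representations $a/b$ and proves well-definedness via the identity $\G_i(mP,k)=\G_i(P,mk)m^i$ of Lemma \ref{lem:int_dilations}.
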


We remark that $\Grat_i(P,\cdot)$ is an extension of $\G_i(P,\cdot)$ to rational numbers. Hence,
$\Grat_0(P,0)=\G_0(P,0)=1$. Furthermore, we have that $\Grat_{\dim(P)}(P,r)=\vol_{\dim(P)}(P)$ for all $r$ such that
$r\aff(P)$ contains integer points.

We define a rational analogue of the $i$-index. These rational indices allow us to show a refined result on the periods of rational Ehrhart quasi-polynomials.

\begin{definition}%{{{
Let the \emph{rational denominator} $\rd(P)$ of $P$ be the smallest positive rational number $r$ 
such that $rP$ is an integral polytope:
\begin{equation*}
\rd(P)=\min\{r\in\Q_{>0}: rP\text{ is an integral polytope}\},
\end{equation*}
and let the \emph{rational $i$-index} $\rind_i(P)$ of $P$ 
be the smallest positive rational number $r$ such that for each $i$-face $F$ of $P$ the affine space $r\aff(F)$ 
contains integral points:
\begin{equation*}
\rind_i(P)=\min\{r\in\Q_{>0}: r\,\aff(F)\cap\Z^n\neq\emptyset\emph{, for all }i\emph{-faces } F\}.
\end{equation*}
%}}}
\end{definition}

Then we get the following result:

\begin{corollary}\label{cor:periods}%{{{
Let $P$ be a rational polytope. Then $\rind_i(P)$ is a period of $\Grat_i(P,\cdot)$.
Furthermore, $\Grat(P,\cdot):\Q_{\geq 0}\to\Z$ is a rational quasi-polynomial with period $\rind_0(P)=\rd(P)$.
%}}}
\end{corollary}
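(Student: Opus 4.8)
The plan is to pull everything back to the ordinary Ehrhart theorem together with McMullen's period bound (Theorem~\ref{ehr_quasipoly}), via the dilations $P\mapsto\tfrac1N P$. The starting point is the tautology that for every $N\in\Z_{>0}$ and $k\in\Z_{\geq 0}$,
\[
\Grat(P,k/N)=\#\bigl((k/N)P\cap\Z^n\bigr)=\#\bigl(k\cdot\tfrac1N P\cap\Z^n\bigr)=\G(\tfrac1N P,k),
\]
so along the grid $\tfrac1N\Z_{\geq 0}$ the rational Ehrhart quasi-polynomial of $P$ coincides, after the substitution $k=Nr$, with the ordinary Ehrhart quasi-polynomial of the rational polytope $\tfrac1N P$. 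Expanding $\G(\tfrac1N P,k)=\sum_i\G_i(\tfrac1N P,k)k^i$ and comparing with $\Grat(P,r)=\sum_i\Grat_i(P,r)r^i$ from Theorem~\ref{thm:rat_dilations} gives, for all $r\in\tfrac1N\Z_{\geq 0}$,
\[
\sum_{i=0}^{\dim P}\Grat_i(P,r)\,r^i=\sum_{i=0}^{\dim P}N^i\,\G_i(\tfrac1N P,Nr)\,r^i.
\]

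Next I would extract the coefficients of this identity. On the grid $\tfrac1N\Z_{\geq 0}$ both sides are finite sums $\sum_i c_i(r)r^i$ whose coefficient functions $c_i$ are periodic: on the left $\Grat_i(P,\cdot)$ has period $\ind_i(P)$ by Theorem~\ref{thm:rat_dilations}, and on the right $r\mapsto N^i\G_i(\tfrac1N P,Nr)$ has period $\ind_i(\tfrac1N P)/N$. Let $d$ be the least common multiple of all these periods; since each of them multiplied by $N$ is a positive integer, $d$ lies in $\tfrac1N\Z$, so $\tfrac1N\Z_{\geq 0}$ decomposes into finitely many arithmetic progressions of common difference $d$. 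Restricting the identity to a single such progression $c+d\Z_{\geq 0}$, the coefficient functions are constant there, so both sides become genuine polynomials in $r$ of degree at most $\dim P$ that agree at infinitely many values; hence their coefficients agree, which yields the key formula
\[
\Grat_i(P,r)=N^i\,\G_i(\tfrac1N P,Nr)\qquad\text{for every }i\text{ and every }r\in\tfrac1N\Z_{\geq 0}.
\]
I expect this coefficient-extraction step to be the main obstacle: one has to choose a rational common period that is compatible with the grid before the interpolation (Vandermonde) argument on each residue class goes through. Note that no compatibility between different $N$ is needed, since for any given $r$ one single suitable $N$ will do.

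To control the periods I would then examine the indices, using that the affine hull of any $i$-face $F$ of $P$ can be written as $\aff(F)=\{x:A_Fx=b_F\}$ with integral $A_F,b_F$. The condition $r\,\aff(F)\cap\Z^n\neq\emptyset$ is then equivalent to $rb_F\in A_F\Z^n$, and the set of real $r$ satisfying it is a subgroup of $(\R,+)$; intersecting over the finitely many $i$-faces of $P$ shows that $\{r\in\Q_{>0}:r\,\aff(F)\cap\Z^n\neq\emptyset\text{ for all }i\text{-faces }F\}$ is the positive part of a cyclic subgroup of $(\R,+)$ whose generator is $\rind_i(P)$, and the same reasoning — together with $\aff(F/N)=\tfrac1N\aff(F)$ — shows that the positive integers $k$ with $k\,\aff(F')\cap\Z^n\neq\emptyset$ for all $i$-faces $F'$ of $\tfrac1N P$ are exactly the positive multiples of $\ind_i(\tfrac1N P)$. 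Whenever $N\rind_i(P)\in\Z$, the dilate $\rind_i(P)\,\aff(F)=(N\rind_i(P))\,\aff(F/N)$ meets $\Z^n$ for every $i$-face $F$ of $P$, so $N\rind_i(P)$ is such a multiple of $\ind_i(\tfrac1N P)$ and therefore a period of $\G_i(\tfrac1N P,\cdot)$ by McMullen's part of Theorem~\ref{ehr_quasipoly}. Now fix $r\in\Q_{\geq 0}$ and choose $N$ divisible by the denominators of both $r$ and $\rind_i(P)$; then $r$ and $r+\rind_i(P)$ both lie in $\tfrac1N\Z_{\geq 0}$, and the key formula gives
\[
\Grat_i(P,r+\rind_i(P))=N^i\G_i\bigl(\tfrac1N P,\,Nr+N\rind_i(P)\bigr)=N^i\G_i\bigl(\tfrac1N P,Nr\bigr)=\Grat_i(P,r),
\]
so $\rind_i(P)$ is a period of $\Grat_i(P,\cdot)$.

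Finally I would settle the last assertion. The value $\Grat(P,r)=\#(rP\cap\Z^n)$ is a (nonnegative) integer, being the cardinality of a finite set. For $i=0$ the $0$-faces of $P$ are its vertices, and $r$ belongs to the set above precisely when $rv\in\Z^n$ for every vertex $v$, i.e.\ precisely when $rP$ is an integral polytope; hence $\rind_0(P)=\rd(P)$. Moreover every $i$-face $F$ has a vertex $v$ of $P$, and if $rv\in\Z^n$ then $rv\in r\,\aff(F)\cap\Z^n$; so every $r$ admissible for the $0$-faces is admissible for the $i$-faces, in particular $\rd(P)=\rind_0(P)$ itself is, and therefore $\rd(P)$ is a (positive integer) multiple of the generator $\rind_i(P)$. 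By the previous paragraph $\rd(P)$ is then a period of each $\Grat_i(P,\cdot)$, so $\Grat(P,r)=\sum_{i=0}^{\dim P}\Grat_i(P,r)r^i$ exhibits $\Grat(P,\cdot)\colon\Q_{\geq 0}\to\Z$ as a rational quasi-polynomial with period $\rd(P)=\rind_0(P)$.
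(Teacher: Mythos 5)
Your argument is correct, but it reaches McMullen's theorem by a different route than the paper, so a comparison is worthwhile. The paper's proof is much shorter: it applies Theorem~\ref{thm:rat_dilations} to the single dilate $\rind_i(P)P$, whose integral $i$-index equals $1$, so that $\Grat_i(\rind_i(P)P,\cdot)$ has period $1$, and then transfers this back to $P$ via the homogeneity identity $\Grat_i(sP,r)=\Grat_i(P,sr)s^i$ of Lemma~\ref{lem:rat_dilations}; the divisibility $\rind_0(P)/\rind_i(P)\in\Z$ needed for the final assertion is supplied by Lemma~\ref{lem:rat_divisors}. You instead clear denominators with $\tfrac1N P$ and invoke McMullen's period bound for the classical quasi-polynomial $\G_i(\tfrac1N P,\cdot)$. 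Your key formula $\Grat_i(P,r)=N^i\,\G_i(\tfrac1N P,Nr)$ is in fact the paper's \emph{definition} of $\Grat_i$ (and a special case of Lemma~\ref{lem:rat_dilations}), so your interpolation-on-arithmetic-progressions derivation of it could be skipped; that said, it is the right thing to do when only the statement of Theorem~\ref{thm:rat_dilations} is available, since it shows the formula is forced for any choice of periodic coefficients, and the argument (choose a common period in $\tfrac1N\Z$, then compare polynomials agreeing at infinitely many points on each residue class) is sound. Your cyclic-subgroup description of the admissible dilation factors, together with the observation that every $i$-face contains a vertex, gives a clean direct proof that $\rd(P)=\rind_0(P)$ is a positive integer multiple of $\rind_i(P)$, replacing the chain argument $\rind_{i-1}(P)/\rind_i(P)\in\Z$ of Lemma~\ref{lem:rat_divisors}; the same cyclic-subgroup structure is what the paper uses implicitly there. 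The only caveat — shared with the paper, so not held against you — is the degenerate case where every relevant affine hull meets $\Z^n$ after arbitrary dilation (e.g.\ passes through the origin), in which the defining minimum for $\rind_i(P)$ is not attained; both proofs silently exclude this.
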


Ehrhart's reciprocity law is also true for rational Ehrhart quasi-polynomials:

\begin{corollary}\label{cor:reci}%{{{
Let $P$ be a rational polytope and let $\Grat(P,r)=\sum_{i=0}^{\dim(P)}\Grat_i(P,r)r^i$ be its rational Ehrhart quasi-polynomial.
Then for $r\in\Q_{\geq 0}$,
\begin{equation*}
\#(r\,\inner(P)\cap\Z^n)=(-1)^{\dim(P)}\Grat(P,-r).
\end{equation*}
%}}}
\end{corollary}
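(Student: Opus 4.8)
The plan is to deduce the rational reciprocity law from the classical Ehrhart--Macdonald reciprocity law by a rescaling trick. Fix $r=p/q\in\Q_{>0}$ with $p,q\in\Z_{>0}$ (the value $r=0$ is degenerate and behaves exactly as in the classical theory), and set $Q:=\tfrac1q P$, which is again a rational polytope. Dilation by a positive scalar preserves dimension and commutes with passage to the (relative) interior, so $\dim Q=\dim P$, $rP=pQ$, and $r\,\inner(P)=p\,\inner(Q)$. Hence $\#(r\,\inner(P)\cap\Z^n)=\#(p\,\inner(Q)\cap\Z^n)$, and for every nonnegative integer $m$ we have $\Grat(P,m/q)=\#(mQ\cap\Z^n)=\G(Q,m)$, so the restriction of $\Grat(P,\cdot)$ to $\tfrac1q\Z_{\geq0}$ is, after the substitution $m=qs$, just the ordinary Ehrhart quasi-polynomial of $Q$.

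The heart of the argument is to check that the value $\Grat(P,-r)$ --- read off from the rational Ehrhart quasi-polynomial of Theorem~\ref{thm:rat_dilations}, whose coefficients are extended to all of $\Q$ by periodicity --- coincides with $\G(Q,-p)$, the ordinary Ehrhart quasi-polynomial of $Q$ evaluated at the negative integer $-p$. I would first observe that $m\mapsto\Grat(P,m/q)$ is a genuine quasi-polynomial in $m\in\Z$: writing $\rind_i(P)=a_i/b_i$ with $a_i,b_i\in\Z_{>0}$, the numerator $a_i=b_i\,\rind_i(P)$ is an integral multiple of $\rind_i(P)$, so $q\,a_i\in\Z_{>0}$ is an (integral) period of $m\mapsto\Grat_i(P,m/q)$; therefore $m\mapsto\Grat(P,m/q)=\sum_i q^{-i}\,\Grat_i(P,m/q)\,m^i$ has periodic coefficients of integral period, i.e.\ is a quasi-polynomial on $\Z$. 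This quasi-polynomial agrees with the quasi-polynomial $m\mapsto\G(Q,m)$ for all $m\geq0$ by the previous paragraph. Since a quasi-polynomial is a polynomial on each residue class modulo any common period and a polynomial is determined by infinitely many values, the two quasi-polynomials coincide on all of $\Z$; evaluating at $m=-p$ gives $\Grat(P,-r)=\G(Q,-p)$.

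It then remains to invoke classical Ehrhart--Macdonald reciprocity for the rational polytope $Q$, namely $\#(p\,\inner(Q)\cap\Z^n)=(-1)^{\dim Q}\,\G(Q,-p)$ (valid in the same range of dilation parameters as the claimed identity), and to substitute the three identifications $\#(r\,\inner(P)\cap\Z^n)=\#(p\,\inner(Q)\cap\Z^n)$, $\dim Q=\dim P$, and $\G(Q,-p)=\Grat(P,-r)$. This yields $\#(r\,\inner(P)\cap\Z^n)=(-1)^{\dim P}\,\Grat(P,-r)$, which is the assertion.

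The only genuinely delicate point is the middle step: being precise about what the symbol $\Grat(P,-r)$ denotes and matching it with $\G(\tfrac1q P,-p)$. If the construction behind Theorem~\ref{thm:rat_dilations} already defines $\Grat(P,p/q)$ via the ordinary Ehrhart quasi-polynomial of $\tfrac1q P$, this identification is immediate and the corollary is essentially the classical reciprocity law read through a change of dilation variable; otherwise one needs the uniqueness-of-quasi-polynomial-extension argument sketched above. Everything else is routine bookkeeping about scaling polytopes and their interiors.
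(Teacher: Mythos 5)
Your proof is correct and follows essentially the same route as the paper: apply classical Ehrhart reciprocity to $\tfrac1q P$ at the integer dilation $p$ and identify $\Grat(P,-r)$ with $\G\left(\tfrac1q P,-p\right)$. Since the paper defines $\Grat_i\left(P,\tfrac pq\right):=\G_i\left(\tfrac1q P,p\right)q^i$ in the proof of Theorem~\ref{thm:rat_dilations}, the identification you flag as delicate is immediate, and your quasi-polynomial-uniqueness fallback, while sound, is not needed.
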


We show further that the coefficients $\Grat_i(P,\cdot)$ of the rational Ehrhart quasi-polynomial are piecewise polynomials.
Here, we assume $P$ to be full-di\-men\-sional. This assumption is necessary since, if $P$ is contained in an affine hyper\-plane
not containing $0$, we have $\#(rP\cap\Z^m)=0$ for all $r\in\Q_{\geq 0}$ such that $r\aff(P)$ does not contain integral points. Thus, in that case $\Grat_i(P,r)=0$
for nearly all points $r\in\Q_{\geq 0}$. On the other hand, if $P$ is contained in a hyperplane containing $0$, it behaves like 
a full-dimensional polytope. 

\begin{theorem}\label{thm:polycoeffs}%{{{
Let $P\subset\R^n$ be an $n$-dimensional rational polytope and let $\Grat(P,r)=\sum_{i=0}^n\Grat_i(P,r)r^i$ be its rational Ehrhart
quasi-polynomial. Then $\Grat_i(P,\cdot)$ is a piecewise polynomial of degree $n-i$.
%}}}
\end{theorem}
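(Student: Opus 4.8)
The plan is to reduce to ordinary (integral) Ehrhart theory of rescaled copies of $P$, read off a ``sawtooth'' description of the coefficients, and then pin down the degrees by a derivation identity together with a downward induction. First I would record the reduction underlying Theorem~\ref{thm:rat_dilations}: for $r=q/N$ with $q\in\Z_{\ge 0}$ and $N\in\Z_{>0}$ one has $rP=q\cdot(\tfrac1N P)$, hence $\Grat(P,q/N)=\G(\tfrac1N P,q)$, and comparing the two quasi-polynomial expansions on the lattice $\tfrac1N\Z$ (invoking the uniqueness of the expansion into $\rind_i$-periodic coefficients, which is part of Theorem~\ref{thm:rat_dilations}) yields $\Grat_i(P,q/N)=N^i\,\G_i(\tfrac1N P,q)$ for all $q\in\Z_{\ge 0}$. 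Since a polynomial is determined by its values on any infinite set and the identities for different $N$ are compatible, it is enough to produce, for each $N$, a partition of $\R_{\ge 0}$ into intervals --- finitely many per period $\rind_i(P)$, which Corollary~\ref{cor:periods} allows us to use --- on whose interiors the function $r\mapsto N^i\G_i(\tfrac1N P,Nr)$ agrees on $\tfrac1N\Z$ with a polynomial of degree $n-i$, compatibly in $N$.

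Next I would apply Brion's theorem (equivalently, run the cone-by-cone proof of Theorem~\ref{ehr_quasipoly}) to the full-dimensional rational polytope $rP$, whose vertices are the points $rv$ with $v\in\vertices(P)$ and whose tangent cones $K_v=\R_{\ge 0}(P-v)$ do not depend on $r$: thus $\#(rP\cap\Z^n)=\sum_v\mathcal F_v(r)$, where $\mathcal F_v(r)$ is the regularized number of lattice points of $rv+K_v$. Triangulating each $K_v$ into half-open simplicial cones with fixed integral ray generators and using the Barvinok--Stanley expansion, $\mathcal F_v(r)$ becomes a finite sum of residue-type terms whose only dependence on $r$ is through the positions, relative to $rv$, of the finitely many lattice points of a translated fundamental parallelepiped; coordinate-wise these positions are affine in the fractional parts $r\mapsto\{\langle rv,w\rangle\}$ for fixed dual vectors $w$, and each such fractional part is piecewise linear in $r$ with breakpoints in a fixed arithmetic progression. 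Substituting these piecewise-linear functions into the residue terms shows that on the interior of each interval of a suitable partition (finite per period), every $\mathcal F_v$, hence $\Grat(P,\cdot)$, hence --- via the grading above --- every $\Grat_i(P,\cdot)$, agrees on the relevant grid with a polynomial in $r$, of degree at most $n-i$, the $r^i$-graded part being controlled by a residue of correspondingly lower order. This establishes piecewise polynomiality together with the bound $\deg\le n-i$.

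To upgrade the bound to an equality I would prove the derivation identity
\[\frac{\d}{\d r}\,\Grat_i(P,r)=-(i+1)\,\Grat_{i+1}(P,r)\qquad(0\le i<n),\]
valid on the interior of each interval of the partition --- either by differentiating the Brion expression above term by term, or conceptually from Ehrhart reciprocity (Corollary~\ref{cor:reci}) applied to the face decomposition $\#(rP\cap\Z^n)=\sum_{F}\#\bigl(r\,\mathrm{relint}(F)\cap\Z^n\bigr)$, by induction on $\dim P$ --- and then argue by downward induction on $i$. Indeed $\Grat_n(P,r)=\vol_n(P)$ is a nonzero constant, of degree $0=n-n$; and if, on an interval, $\Grat_{i+1}(P,\cdot)$ is a polynomial of degree exactly $n-i-1$ with leading coefficient $(-1)^{n-i-1}\binom{n}{i+1}\vol_n(P)\neq 0$, then integrating the identity shows that $\Grat_i(P,\cdot)$ is a polynomial of degree exactly $n-i$ with leading coefficient $(-1)^{n-i}\binom{n}{i}\vol_n(P)\neq 0$, which closes the induction.

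The main difficulty will be the sawtooth step: one has to make precise that the lattice points of a translated parallelepiped move piecewise-affinely in $r$, check that the apparent poles in the Barvinok--Stanley terms genuinely cancel so that each piece is an honest polynomial, and --- hardest of all --- control how this polynomial distributes over the Ehrhart grading, so that the $r^i$-component has degree exactly $n-i$ rather than merely at most $n$. A Brion-free alternative is to carry out the whole argument by induction on $\dim P$ through the face decomposition and reciprocity; the delicate point there is that ``$r\,\aff(F)\cap\Z^n\neq\emptyset$'' is only an arithmetic-progression condition on $r$, so the inductive hypothesis for a face $F$ must be taken relative to the lattice $\Z^n\cap U$ in the linear subspace $U$ parallel to $\aff(F)$.
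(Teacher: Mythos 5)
Your strategy is genuinely different from the paper's and, in outline, it is the route taken by Baldoni--Berline--K\"oppe--Vergne in the reference the paper cites for real Ehrhart theory; but as written the two load-bearing steps are exactly the ones you defer, and one of them is in danger of being circular. First, the Brion/Barvinok step. The function $\Grat(P,\cdot)$ itself is piecewise \emph{constant} (it is a lattice-point count), so the substance of the theorem is not that $\Grat(P,\cdot)$ is piecewise polynomial but that the individual periodic coefficients $\Grat_i(P,\cdot)$ are, with the degree dropping as $i$ grows. Your phrase ``via the grading above'' --- passing from a piecewise description of $\sum_v\mathcal{F}_v(r)$ to a piecewise description of each $\Grat_i(P,\cdot)$ with $\deg\le n-i$ --- is precisely the content of the theorem, and you flag it as the hardest difficulty without resolving it; to carry it out you would need to match, uniformly in $N$, the graded Laurent expansion of each cone's generating function against the decomposition $\sum_i\Grat_i(P,r)r^i$, which requires a uniqueness statement for that decomposition that is not supplied by Theorem~\ref{thm:rat_dilations}. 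Second, the degree-exactness step. You invoke $\Grat_i'(P,r)=-(i+1)\Grat_{i+1}(P,r)$, but in the paper this is Theorem~\ref{thm:derivatives}, proved \emph{after} and \emph{from} the polynomial structure asserted in Theorem~\ref{thm:polycoeffs}; neither of your two suggested independent derivations is carried out, and the most natural one (differentiate $\sum_i\Grat_i(P,r)r^i=\mathrm{const}$ on an interval of constancy to get $\sum_i\bigl(\Grat_i'(P,r)+(i+1)\Grat_{i+1}(P,r)\bigr)r^i=0$) does not let you conclude that each graded summand vanishes, because the $\Grat_i(P,r)$ themselves vary with $r$; separating the graded pieces needs the two-parameter argument ($\tilde r$ versus $k\,\rd(P)$) that the paper deploys.

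The paper's proof is far more elementary and closes both gaps at once. The only geometric input is that $\Grat(P,\cdot)$ is constant on the open intervals between the finitely many (per period $\rd(P)$) dilation factors at which a facet of $rP$ lands in a lattice hyperplane, and that these breakpoints recur $\rd(P)$-periodically. The rest is Lemma~\ref{qp_const_lemma}: a rational quasi-polynomial $p(r)=p_nr^n+\sum_{i<n}p_i(r)r^i$ with constant nonzero leading coefficient and period $d$, which is constant on every translate $(r_1+kd,r_2+kd)$, automatically has each $p_i$ polynomial of degree exactly $n-i$ on $(r_1,r_2)$; this is proved by induction on $n$ via the difference $q(r)=p(r+d)-p(r)$, and the exact degrees (with leading coefficients $(-1)^{n-i}\binom{n}{i}p_n$, matching your computation) come out of the same induction, so no derivative identity is needed as an input. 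If you want to salvage your plan, the cleanest fix is to replace the entire Brion apparatus by the observation that $\Grat(P,\cdot)$ is locally constant with periodically recurring breakpoints, and then prove the algebraic lemma; your downward induction then becomes a corollary rather than an ingredient.
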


By $\Grat_i'(P,r)$ we denote the first derivative of $\Grat_i(P,r)$ in $r$ if it exists. Using this
we deduce the following relation between the coefficients of the rational Ehrhart quasi-polynomial of
a polytope $P$:

\begin{theorem}\label{thm:derivatives}%{{{
Let $P\subset\R^n$ be an $n$-dimensional rational polytope. Then 
\begin{equation*}
\Grat_i'(P,r)=-(i+1)\Grat_{i+1}(P,r),\quad i=0,\ldots,n-1,
\end{equation*} 
for all $r\geq 0$ where the derivative exists.%}}}
\end{theorem}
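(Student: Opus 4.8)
The plan is to upgrade the conclusion of Theorem~\ref{thm:polycoeffs} from ``piecewise polynomial'' to the observation that the \emph{sum} $\sum_{i=0}^{n}\Grat_i(P,r)r^i$ is piecewise \emph{constant}, and then to split the resulting single relation among the coefficients into the $n$ asserted ones by playing off periodicity against an arithmetic progression. Fix a common period $d\in\Q_{>0}$ of the functions $\Grat_0(P,\cdot),\dots,\Grat_n(P,\cdot)$ (these are periodic by Theorem~\ref{thm:rat_dilations}), and let $B\subset\R_{\geq 0}$ be the union of the breakpoints of these $n+1$ piecewise polynomials; since each breakpoint set is locally finite and periodic with period dividing $d$, the set $B$ is locally finite and $d$-periodic. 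On any connected component $C$ of $\R_{\geq 0}\setminus B$ every $\Grat_i(P,\cdot)$ restricts to an ordinary polynomial, so $Q_C(r):=\sum_{i=0}^{n}\Grat_i(P,r)r^i$ is a polynomial on $C$, and for rational $r\in C$ it equals $\#(rP\cap\Z^n)\in\Z$ by Theorem~\ref{thm:rat_dilations}. A one-variable polynomial that takes integer values at every rational point of an interval is necessarily constant; hence $Q_C\equiv q_C$ for some constant $q_C$.

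Differentiating the identity $\sum_{i=0}^{n}\Grat_i(P,r)r^i=q_C$ on $C$, using that $\Grat_n(P,r)=\vol_n(P)$ does not depend on $r$ (so $\Grat_n'(P,r)=0$), and re-indexing the second sum in $Q_C'(r)=\sum_{i=0}^{n}\Grat_i'(P,r)r^i+\sum_{i=1}^{n}i\,\Grat_i(P,r)r^{i-1}$, one obtains the single master relation
\begin{equation*}
\sum_{i=0}^{n-1}\bigl(\Grat_i'(P,r)+(i+1)\Grat_{i+1}(P,r)\bigr)r^i=0\qquad\text{for every }r\in\R_{\geq 0}\setminus B.
\end{equation*}
Put $A_i(r):=\Grat_i'(P,r)+(i+1)\Grat_{i+1}(P,r)$; each $A_i$ has period dividing $d$. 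Now fix $\rho\in\R_{\geq 0}\setminus B$. Since $B$ is $d$-periodic, $\rho+kd\in\R_{\geq 0}\setminus B$ for every $k\in\Z_{\geq 0}$, so evaluating the master relation at $\rho+kd$ and using $A_i(\rho+kd)=A_i(\rho)$ gives $\sum_{i=0}^{n-1}A_i(\rho)(\rho+kd)^i=0$ for all $k\in\Z_{\geq 0}$. This is a polynomial in $k$ of degree at most $n-1$ with infinitely many zeros, so all of its coefficients vanish: the coefficient of $k^{n-1}$ is $A_{n-1}(\rho)d^{n-1}$, which forces $A_{n-1}(\rho)=0$; the coefficient of $k^{n-2}$ is then $A_{n-2}(\rho)d^{n-2}$, which forces $A_{n-2}(\rho)=0$; continuing, $A_i(\rho)=0$ for $i=n-1,\dots,0$. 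As $\rho$ was an arbitrary point of $\R_{\geq 0}\setminus B$, the claimed identity holds off the locally finite set $B$.

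It remains to extend the identity to every $r$ at which $\Grat_i'(P,r)$ exists. If $\Grat_i(P,\cdot)$ is differentiable at such an $r$, then its one-sided polynomial pieces at $r$ have the same derivative at $r$; since the identity just proved shows that on each side of $r$ the function $\Grat_{i+1}(P,\cdot)$ agrees with $-\tfrac{1}{i+1}$ times the derivative of the corresponding piece of $\Grat_i(P,\cdot)$, it follows that $\Grat_{i+1}(P,\cdot)$ is continuous at $r$ and that $\Grat_i'(P,r)=-(i+1)\Grat_{i+1}(P,r)$ there as well. I expect the conceptual heart of the proof to be the decoupling step of the second paragraph --- without it one is left with a single relation among the $\Grat_i$ rather than the $n$ individual ones --- while the main technical point requiring care is to set up $B$ so that it is simultaneously $d$-periodic and a set on which the master relation is valid; this is exactly where the piecewise polynomiality supplied by Theorem~\ref{thm:polycoeffs} enters. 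The remaining ingredients (the ``integer-valued polynomial is constant'' lemma and the elementary calculus) are routine.
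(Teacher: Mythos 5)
Your proof is correct, and while it rests on the same two pillars as the paper's argument --- the piecewise constancy of $r\mapsto\#(rP\cap\Z^n)$ and the interplay between periodicity of the coefficients and the polynomial structure along an arithmetic progression $\rho+kd$ --- the execution is genuinely different. The paper decouples \emph{first}: it expands $c_{m,k}=\sum_i\Grat_i(P,\tilde r)(\tilde r+k\,\rd(P))^i$, kills the coefficients of $\tilde r^h$ for $h\neq 0$, and thereby obtains the explicit closed form $\Grat_j(P,\tilde r)=\sum_{i=0}^{n-j}\binom{i+j}{j}\Grat_{i+j,0}(-1)^i\tilde r^i$ on each piece, from which the derivative identity is read off. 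You differentiate \emph{first}, reducing everything to the single master relation $\sum_{i=0}^{n-1}A_i(r)r^i=0$, and then decouple it by the triangular/Vandermonde argument in $k$. Your route avoids the binomial bookkeeping and makes the logical role of periodicity more transparent; the paper's route yields the explicit polynomial formulas for the $\Grat_j$ as a by-product. Your derivation of piecewise constancy (integer-valued polynomial on a rational-dense interval must be constant) also differs from the paper's geometric jump argument, and is valid, though note that Theorem~\ref{thm:polycoeffs} as stated does not name its breakpoints; to know that $B$ can be taken locally finite and $\rd(P)$-periodic you should point to the explicit pieces $(r_{m-1}+k\,\rd(P),\,r_m+k\,\rd(P))$ constructed in its proof. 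Two small caveats: in the final extension paragraph, differentiability of $\Grat_i$ at a breakpoint $r$ only forces the two one-sided limits of $\Grat_{i+1}$ at $r$ to agree, not that they equal the value $\Grat_{i+1}(P,r)$ itself (a removable discontinuity is not excluded by your argument); the paper's proof does not address such points either, so this does not put you below its standard, but the sentence claiming continuity of $\Grat_{i+1}$ at $r$ is not fully justified as written.
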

This theorem can be seen as a first step towards investigations on period collapses; it implies that,
in contrast to the integral case, the minimal periods of $\Grat_i(P,r)$ are decreasing with $i$
if $0\in P$.

In general nothing is known about extremal values of $G_i(P,k)$ or $\Grat_i(P,r)$.  
As a first result in this direction we have in the 2-dimensional case:

\begin{theorem}\label{thm:dim2}%{{{
Let $P$ be an arbitrary $2$-dimensional rational polygon. Then $|\Grat_1(P,r)|\leq \Grat_1(P,0)$ for all $0\leq r<\rd(P)$.
%}}}
\end{theorem}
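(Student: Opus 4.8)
The plan is to pin down the value $\Grat_1(P,0)$, reduce the two-sided estimate to a one-sided one, and prove that by counting lattice points in thin shells. We may assume $P$ is full-dimensional in $\R^2$: otherwise $\aff(P)$ is a plane, which for the statement to be nonvacuous contains the origin, and the argument runs inside it with the induced lattice.

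\emph{Identifying $\Grat_1(P,0)$.} I would restrict the Ehrhart quasi-polynomial to the progression $k=\rd(P)\,m$. Since $\rd(P)P$ is an integral polygon, Pick's formula gives $\#\bigl(m\,\rd(P)P\cap\Z^2\bigr)=\vol_2(\rd(P)P)\,m^2+\tfrac12\,\#\bigl(\bd(\rd(P)P)\cap\Z^2\bigr)\,m+1$. Comparing this with $\Grat(P,\rd(P)m)=\vol_2(P)\,(\rd(P)m)^2+\Grat_1(P,0)\,\rd(P)m+\Grat_0(P,0)$ — legitimate since $\rd(P)$ is a common period of the $\Grat_i(P,\cdot)$ by Corollary~\ref{cor:periods} — and using $\#\bigl(\bd(\rd(P)P)\cap\Z^2\bigr)=\rd(P)\sum_E\vol_1(E)$, one gets $\Grat_1(P,0)=\tfrac12\Lambda$, where $\Lambda:=\sum_{E}\vol_1(E)$ is the lattice perimeter of $P$ (the sum of the lattice lengths of its edges).

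\emph{Reduction to a one-sided bound.} I would split $\#(rP\cap\Z^2)$ into the lattice points of $r\,\inner(P)$, of the relative interiors of the dilated edges $rE$, and of the dilated vertices. By Ehrhart reciprocity (Corollary~\ref{cor:reci}) the first count equals $\Grat(P,-r)$, and the one-dimensional case of Theorem~\ref{thm:rat_dilations} controls the edge counts: for a segment $E$ one has $\Grat_1(E,r)=\vol_1(E)$ when $r\,\aff(E)$ meets $\Z^2$ and $\Grat_1(E,r)=0$ otherwise, so $0\le\Grat_1(E,r)\le\vol_1(E)$. Comparing linear coefficients in the resulting identity of rational quasi-polynomials yields
\[
\Grat_1(P,r)+\Grat_1\bigl(P,\rd(P)-r\bigr)=\sum_E\Grat_1(E,r)\ \in\ [0,\Lambda].
\]
Consequently, once we know $\Grat_1(P,s)\le\tfrac12\Lambda=\Grat_1(P,0)$ for all $s\in[0,\rd(P))$, the lower bound comes for free: $\Grat_1(P,r)=\sum_E\Grat_1(E,r)-\Grat_1\bigl(P,\rd(P)-r\bigr)\ge 0-\tfrac12\Lambda$. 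So everything comes down to proving $\Grat_1(P,r)\le\Grat_1(P,0)$.

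\emph{The one-sided bound.} By Theorems~\ref{thm:polycoeffs} and~\ref{thm:derivatives}, $\Grat_1(P,\cdot)$ is piecewise linear with $\Grat_1'(P,r)=-2\vol_2(P)<0$ wherever defined, hence strictly decreasing on each polynomial piece, so its supremum over $[0,\rd(P))$ is attained at $0$ or just to the right of a jump. Assuming first $0\in P$, I would set, for $m\ge1$, $S_m(r):=(r+m\,\rd(P))P\setminus\bigl(r+(m-1)\rd(P)\bigr)P$; telescoping the rational Ehrhart quasi-polynomial gives the exact identity
\[
\rd(P)\,\Grat_1(P,r)=\#\bigl(S_m(r)\cap\Z^2\bigr)-\vol_2\bigl(S_m(r)\bigr),
\]
where $\vol_2(S_m(r))$ is affine and increasing in $r$ while $\#(S_m(r)\cap\Z^2)$ is a step function of $r$, and at $r=0$ the shell lies between the integral polygons $m\,\rd(P)P$ and $(m-1)\rd(P)P$, making the identity tight with defect $\tfrac12\rd(P)\Lambda$. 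The hard part is then to show
\[
\#\bigl(S_m(r)\cap\Z^2\bigr)\ \le\ \vol_2\bigl(S_m(r)\bigr)+\tfrac12\,\rd(P)\,\Lambda,
\]
that is, that sliding the shell outward from phase $0$ to phase $r$ cannot raise its lattice count by more than it raises its volume. I would do this edge by edge, estimating the lattice points in the thin strip of $S_m(r)$ along each edge; the essential point is that whenever a bounding edge of $S_m(r)$ fails to lie on a lattice line it ``just misses'' a whole row of lattice points, so such a strip contributes no more than in the integral ($r=0$) case. This shell bookkeeping — quantifying, at the resonant phases where an edge line becomes lattice, how much $\#(S_m(r)\cap\Z^2)$ may jump relative to $\vol_2(S_m(r))$ — is the main obstacle. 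Finally, the cases $0\in\bd(P)$ and $0\notin P$ reduce to the one above, either by running the shell argument with $S_m(r)$ read as a symmetric difference or by first translating $P$ by an integer vector.
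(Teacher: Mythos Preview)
Your reciprocity reduction is correct and attractive: the identity $\Grat_1(P,r)+\Grat_1(P,\rd(P)-r)=\sum_E\Grat_1(E,r)\in[0,\Lambda]$ does reduce the two-sided estimate to the upper bound, and the identification $\Grat_1(P,0)=\tfrac12\Lambda$ via Pick is fine. The shell identity $\rd(P)\,\Grat_1(P,r)=\#(S_m(r)\cap\Z^2)-\vol_2(S_m(r))$ (for $0\in\inner(P)$) is also correct. But the step you yourself call ``the hard part'' and ``the main obstacle'' is left entirely unproved: the inequality $\#(S_m(r)\cap\Z^2)\le\vol_2(S_m(r))+\tfrac12\rd(P)\Lambda$ is just the theorem restated, and the heuristic ``just misses a whole row of lattice points'' is not an argument. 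Splitting the annulus into edge-strips creates overlaps (or corner pieces) near the vertices, and bounding each strip's lattice count uniformly in the phase $r$ is exactly where all the content sits; nothing you wrote addresses it. Your fallback for $0\notin P$ is also broken: ``translating $P$ by an integer vector'' does not preserve $\Grat_1$ --- the Remark after Theorem~\ref{thm:rat_dilations} shows $\Grat(P,\cdot)$ is not invariant under integer translation, and the concrete check $\Grat_1([0,1]^2,r)=2-2\{r\}$ versus $\Grat_1([1,2]^2,r)=-2\{r\}$ for $r\in(0,\tfrac12)$ makes this explicit. (A small rational polygon need not even contain a lattice point to move to the origin.)

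The paper takes a completely different route that sidesteps both problems. It first computes $\Grat_1(T,r)$ in closed form for triangles with one vertex at the origin (Theorem~\ref{thm_2d_triangle}) and reads off the sharper bound $\Grat_1(T,r)\ge -\Grat_1(T,0)+\Grat_1(g_1,r)+\Grat_1(g_2,r)$ of Corollary~\ref{g1_corollary}. It then proves the inequality for the \emph{integral} coefficients $\G_1$, where integer translation and unimodular maps are legitimate, by triangulating from an interior lattice point; the case of no interior lattice point is handled by replacing $P$ with $(l\,\d(P)+1)P$, which multiplies $\G_1$ by a positive scalar and eventually acquires interior lattice points. Only at the last line does it pass to rational $r$ via $\Grat_1(P,a/b)=\G_1(\tfrac1bP,a)\,b$. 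So the paper never needs the annulus estimate you left open, and never relies on translation invariance of $\Grat_1$.
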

To this end, we work out an explicit example using the approach presented by Sam and Woods in \cite{Sam&Woods2009}.
An analogous statement for $\Grat_0(P,r)$ is not true. 

The paper is organized as follows: In Section \ref{sec:rat} we present all tools used for the proofs of
the results presented in this introduction. The proofs of Theorems \ref{thm:rat_dilations}, \ref{thm:polycoeffs} and \ref{thm:derivatives}
and their Corollaries are given in this section as well. In Section \ref{sec:dim2} we work out a detailed example
(see Theorem~\ref{thm_2d_triangle})
and deduce Theorem \ref{thm:dim2} from the explicit formulas of the rational Ehrhart quasi-polynomial of this example.

For further information on Ehrhart (quasi-)polynomials and similar problems as considered in this work, we refer to 
\cite{Barvinok1992,Beck&Haase2008,Brion&Vergne,Chen&Li&Sam2010}.
%}}}
%%%%%%%%%%%%%%%%%%%%%%%%%%%%%%%%%%%%%%%%%%%%%%%%%%%%%%%%%%%%%%%%%%%%%%%%%%%%%%%%%%%%%%%%%%%%%%%%%%%%%%%%%%%%%%
\section{Rational dilations}\label{sec:rat}%{{{
To prove Theorem \ref{thm:rat_dilations}, we need a well-known property of the $\G_i(\cdot,\cdot)$, which we present here:

\begin{lemma}\label{lem:int_dilations}%{{{
Let $\G(P,k)=\sum_{i=0}^{\dim(P)}\G_i(P,k)k^i$ be the Ehrhart quasi-polynomial of a rational polytope $P$. Then
$\G_i(mP,k)=\G_i(P,mk)m^i$ for $m,k\in\Z_{\geq 0}$.
%}}}
\end{lemma}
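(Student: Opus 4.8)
The plan is to exploit the trivial identity $k(mP)=(mk)P$ at the level of lattice-point counts and then to compare the coefficients of two quasi-polynomials in the variable $k$.

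First I would note that for all $m,k\in\Z_{\geq 0}$,
\[
\G(mP,k)=\#\bigl(k(mP)\cap\Z^n\bigr)=\#\bigl((mk)P\cap\Z^n\bigr)=\G(P,mk).
\]
Expanding the outer term via the Ehrhart quasi-polynomial of $mP$ and the inner term via that of $P$, and using $\dim(mP)=\dim(P)$, this becomes
\[
\sum_{i=0}^{\dim(P)}\G_i(mP,k)\,k^i=\sum_{i=0}^{\dim(P)}\G_i(P,mk)\,m^i k^i\qquad\text{for all }k\in\Z_{\geq 0}.
\]
It remains to argue that both sides are quasi-polynomials in $k$ of bounded degree with a common period, so that the coefficients of $k^i$ can be matched. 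The functions $\G_i(mP,\cdot)$ are periodic by Theorem~\ref{ehr_quasipoly}. If $d$ is a common period of all the $\G_i(P,\cdot)$, then $k\mapsto\G_i(P,mk)$ is also periodic with period $d$, since $\G_i(P,m(k+d))=\G_i(P,mk+md)=\G_i(P,mk)$ because $md$ is a multiple of $d$. Thus, fixing a residue $j$ modulo a common period $D$ of all coefficient functions appearing on either side and writing $k=j+D\ell$ with $\ell\in\Z_{\geq 0}$, each side of the displayed identity becomes a genuine polynomial in $k$ of degree at most $\dim(P)$, and these two polynomials agree at the infinitely many arguments $k=j,\,j+D,\,j+2D,\dots$. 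Hence they coincide as polynomials, and comparing the coefficient of $k^i$ yields $\G_i(mP,k)=\G_i(P,mk)\,m^i$ for every $k\equiv j\pmod D$; as $j$ was arbitrary this holds for all $k\in\Z_{\geq 0}$.

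The only subtle point is the step from pointwise equality of two quasi-polynomials to equality of their coefficient functions, which is exactly what the restriction to a fixed congruence class modulo a common period accomplishes, together with the fact that a polynomial of bounded degree vanishing at infinitely many integers is identically zero. Beyond this bookkeeping with periods, I do not expect any real obstacle.
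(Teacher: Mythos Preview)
Your argument is correct and follows the same route as the paper: write $\G(mP,k)=\G(P,mk)$ from $k(mP)=(mk)P$, expand both sides as quasi-polynomials in $k$, and compare coefficients. The paper simply asserts the coefficient comparison and defers the justification to Barvinok \cite[Section~4.3]{Barvinok2006}, whereas you spell out explicitly why equality of quasi-polynomials forces equality of coefficient functions by restricting to a fixed residue class modulo a common period and invoking polynomial identity; this is exactly the content of that reference.
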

\begin{proof}%{{{
We have $\#(mkP\cap\Z^n)=\G(mP,k)=\sum_{i=0}^{\dim(P)}\G_i(mP,k)k^i$ and 
$\#(mkP\cap\Z^n)=\G(P,mk)=\sum_{i=0}^{\dim(P)}\G_i(P,mk)(mk)^i$.
Comparing coefficients yields $\G_i(mP,k)=\G_i(P,mk)m^i$, $\forall m,k\in\Z_{\geq 0}$ (see Barvinok 
\cite[Section 4.3]{Barvinok2006} for details on equality of quasi-polynomials).
%}}}
\end{proof}
\begin{proof}[Proof of Theorem \ref{thm:rat_dilations}]%{{{
Let $\G(P,k)=\sum_{i=0}^{\dim(P)}\G_i(P,k)k^i$ for $k\in\Z_{\geq 0}$ be the Ehrhart quasi-polynomial of $P$. 
We define 
\begin{equation*}
\Grat_i\left(P,\frac{a}{b}\right):=\G_i\left(\frac{1}{b}P,a\right)b^i.
\end{equation*}
$\Grat_i\left(P,\frac{a}{b}\right)$ is well-defined, since for $\frac{a}{b}=\frac{ka}{kb}$
we get $\Grat_i\left(P,\frac{ka}{kb}\right)=\G_i(\frac{1}{kb}P,ka)k^ib^i=\G_i(\frac{1}{b}P,a)b^i=\Grat_i\left(P,\frac{a}{b}\right)$ by Lemma \ref{lem:int_dilations}.
Then
\begin{equation*}
\Grat\left(P,\frac{a}{b}\right)
=\G\left(\frac{1}{b}P,a\right)
=\sum_{i=0}^{\dim(P)}\G_i\left(\frac{1}{b}P,a\right)a^i=\sum_{i=0}^{\dim(P)}\Grat_i\left(P,\frac{a}{b}\right)\left(\frac{a}{b}\right)^i.
\end{equation*}
It remains to show that $\Grat_i\left(P,\frac{a}{b}\right)$ is periodic with period $\ind_i(P)$.  
Since $b\,\ind_i(P)$ is a multiple of $\ind_i\left(\frac{1}{b}P\right)$, we get
\begin{equation*}
\Grat_i\left(P,\frac{a}{b}+\ind_i(P)\right)
=\G_i\left(\frac{1}{b}P,a+b\,\ind_i(P)\right)b^i\\
=\G_i\left(\frac{1}{b}P,a\right)b^i
=\Grat_i\left(P,\frac{a}{b}\right).\qedhere
\end{equation*}
%}}}
\end{proof}

This proof implies that knowing the classical Ehrhart quasi-polynomial of $\frac{1}{b}P$
for all positive integers $b$ is equivalent to knowing the rational Ehrhart quasi-polynomial of $P$.
However, as the next remark shows, it is not enough to know the Ehrhart quasi-polynomial of a polytope
to recover the rational version:

\begin{remark}%{{{
$\Grat(P,\cdot):\Q_{\geq 0}\to\Z$ is not invariant under translations of $P$ with respect to integral
vectors. Furthermore, $\Grat(P,\cdot):\Q_{\geq 0}\to\Z$ is not necessarily monotonically increasing if $0\not\in P$.
For instance, let
\begin{equation*}\begin{split}
T_1&=\conv\left(\binom{1/2}{-1/2},\binom{-1/2}{-1/2},\binom{0}{3/2}\right),\\
T_2&=\conv\left(\binom{1/2}{1/2},\binom{-1/2}{1/2},\binom{0}{5/2}\right)
\end{split}\end{equation*}
(see Figure \ref{fig:rem}).Then $T_2=T_1+\binom{0}{1}$.
Nevertheless, we have $\Grat(T_1,2/3)=2$ and $\Grat(T_2,2/3)=1$. Moreover, $\Grat(T_2,2)=7$ and $\Grat(T_2,11/5)=4$.
\begin{figure}[ht]\centering
\includegraphics{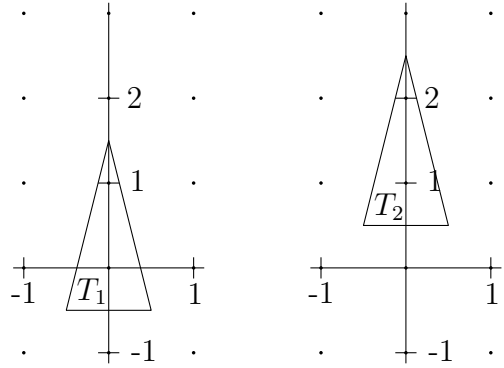}
\caption{Triangles $T_1$ and $T_2$}\label{fig:rem}
\end{figure}
%}}}
\end{remark}

Furthermore, as for the integral case, there are examples of polytopes with different combinatorial type
that have the same rational Ehrhart quasi-polynomial. Stanley \cite{STANLEY1986} constructed two classes of
polytopes, \emph{order polytopes} and \emph{chain polytopes}. In general these are polytopes of different combinatorial type 
but with the same Ehrhart polynomials. Since his consideration is independent of the integrality of dilation factors, these polytopes 
have also the same rational Ehrhart polynomials.

\begin{lemma}\label{lem:rat_dilations}%{{{
For all $r,s\in\Q$ we have $\Grat_i(sP,r)=\Grat_i(P,sr)s^i$.
%}}}
\end{lemma}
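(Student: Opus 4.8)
The plan is to reduce the statement to the defining formula for the rational Ehrhart coefficients used in the proof of Theorem~\ref{thm:rat_dilations}, namely $\Grat_i(Q,a/b)=\G_i(\tfrac{1}{b}Q,a)\,b^i$ for any rational polytope $Q$ and any $a\in\Z_{\geq 0}$, $b\in\Z_{>0}$, together with the integer dilation identity $\G_i(mQ,k)=\G_i(Q,mk)m^i$ of Lemma~\ref{lem:int_dilations}. I first treat $r,s\in\Q_{\geq 0}$ and fix representations $r=a/b$, $s=c/d$ with $a,c\in\Z_{\geq 0}$ and $b,d\in\Z_{>0}$; by the well-definedness established in the proof of Theorem~\ref{thm:rat_dilations}, the value of each $\Grat_i(\cdot,\cdot)$ below is independent of the chosen representatives, so these are legitimate. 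Since $sP=\tfrac{c}{d}P$ is again a rational polytope, applying the defining formula to $Q=\tfrac{c}{d}P$ and then Lemma~\ref{lem:int_dilations} to the polytope $\tfrac{1}{bd}P$ with dilation factor $m=c$ gives
\[
\Grat_i(sP,r)=\G_i\bigl(\tfrac{1}{b}\cdot\tfrac{c}{d}P,\,a\bigr)\,b^i=\G_i\bigl(c\cdot\tfrac{1}{bd}P,\,a\bigr)\,b^i=\G_i\bigl(\tfrac{1}{bd}P,\,ca\bigr)\,c^i b^i .
\]

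On the other hand $sr=\tfrac{ca}{bd}$, so the defining formula yields $\Grat_i(P,sr)=\G_i(\tfrac{1}{bd}P,ca)\,(bd)^i$, and hence
\[
\Grat_i(P,sr)\,s^i=\G_i\bigl(\tfrac{1}{bd}P,\,ca\bigr)\,(bd)^i\,(\tfrac{c}{d})^i=\G_i\bigl(\tfrac{1}{bd}P,\,ca\bigr)\,b^i c^i ,
\]
which is exactly the expression obtained for $\Grat_i(sP,r)$. This proves the identity for $r,s\in\Q_{\geq 0}$; the degenerate case $s=0$ is immediate, since then $sP=\{0\}$ and both sides reduce to $1$ for $i=0$ and to $0$ for $i\geq 1$. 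To remove the sign restriction, fix $s>0$ and observe that both $r\mapsto\Grat_i(sP,r)$ and $r\mapsto\Grat_i(P,sr)\,s^i$ are periodic with a common rational period (any common multiple of $\ind_i(sP)$ and $\ind_i(P)/s$); agreeing on $\Q_{\geq 0}$, they therefore agree on all of $\Q$, and the remaining cases $s<0$ and $s=0$ are handled in the same way.

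There is essentially no serious obstacle here: the argument is a bookkeeping exercise in clearing denominators. The only points requiring a little care are choosing the integer representatives consistently with the already-proved well-definedness of $\Grat_i(\cdot,\cdot)$, and disposing of the harmless degenerate and negative cases, which follow from periodicity.
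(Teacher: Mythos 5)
Your core computation is the same as the paper's: both reduce the claim to the defining formula $\Grat_i(Q,a/b)=\G_i(\tfrac1b Q,a)b^i$ and to Lemma~\ref{lem:int_dilations}, and both sides come out as $\G_i(\tfrac{1}{bd}P,ca)\,b^ic^i$. So for $s\in\Q_{>0}$ and $r\in\Q_{\geq 0}$ (extended to $r\in\Q$ by your periodicity argument, which is fine since both sides are periodic in $r$ with a common rational period) the proof is correct and essentially identical to the paper's.

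One caveat: your closing remark that the case $s<0$ is ``handled in the same way'' does not work, and in fact the identity as literally stated fails for negative $s$ under the natural periodic extension of $\Grat_i(P,\cdot)$ to negative arguments. For $P=[0,1]$ and $s=-1$, $r=1$ one has $\Grat_1(-P,1)=\vol([-1,0])=1$ while $\Grat_1(P,-1)\cdot(-1)=-1$; the periodicity-in-$r$ trick cannot repair this because it varies $r$, not $s$, and Lemma~\ref{lem:int_dilations} is only available for $m\geq 0$. This is a looseness you share with the paper (which also writes ``for all $r,s\in\Q$'' but whose computation likewise only covers nonnegative representatives); since the lemma is only ever invoked with $s>0$, nothing downstream is affected, but you should either restrict the statement to $s>0$ or drop the claim about negative $s$.
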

\begin{proof}%{{{
Let $r=\frac{a}{b}$, $s=\frac{c}{d}$. By the definition of $\Grat_i(P,r)$ in the proof of Theorem \ref{thm:rat_dilations},
we get, together with Lemma \ref{lem:int_dilations},
\begin{equation*}\begin{split}
\Grat_i(sP,r)&=\Grat_i\left(\frac{c}{d}P,\frac{a}{b}\right)=\G_i\left(\frac{c}{db}P,a\right)b^i\quad \text{and}\\
\Grat_i(P,sr)s^i&=\Grat_i\left(P,\frac{ac}{bd}\right)\frac{c^i}{d^i}
=\G_i\left(\frac{c}{bd}P,a\right)b^i.\qedhere
\end{split}\end{equation*}
%}}}
\end{proof}
As in the integral case, the rational indices are divisors of each other.
\begin{lemma}\label{lem:rat_divisors}%{{{
Let $P$ be a rational polytope. Then $\rind_{i-1}(P)/\rind_{i}(P)\in\Z$ for $i=1,\ldots,n$.
%}}}
\end{lemma}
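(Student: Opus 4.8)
The statement to prove is Lemma \ref{lem:rat_divisors}: $\rind_{i-1}(P)/\rind_i(P) \in \Z$ for $i = 1, \ldots, n$. The plan is to mimic the classical argument for why $\ind_{i-1}(P)$ is a multiple of $\ind_i(P)$, transported to the rational setting. The key geometric fact is that every $(i-1)$-face $F'$ of $P$ is contained in some $i$-face $F$ of $P$, hence $\aff(F') \subseteq \aff(F)$. Therefore if a positive rational $r$ has the property that $r\,\aff(F)$ meets $\Z^n$ for every $i$-face $F$, then a fortiori, for any $(i-1)$-face $F'$, the affine space $r\,\aff(F')$ sits inside $r\,\aff(F)$ for the $i$-face $F \supseteq F'$, but this does \emph{not} immediately give an integral point in $r\,\aff(F')$ itself — so the direct inclusion argument is not quite enough, and one must argue about denominators instead.

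First I would reformulate the rational $i$-index in terms of denominators. For a rational affine subspace $A \subseteq \R^n$, let $\delta(A)$ be the smallest positive integer $k$ such that $kA \cap \Z^n \neq \emptyset$; equivalently, writing $A = v + U$ with $v$ rational and $U$ a rational linear subspace, $\delta(A)$ is the denominator of the point of $A$ closest to the origin in an appropriate sense (the unique point of $A \cap U^\perp$). A short check shows $r\,A \cap \Z^n \neq \emptyset$ for $r \in \Q_{>0}$ if and only if $\delta(A) \mid r\,\delta(A) \cdot(\text{something})$ — more cleanly: $rA$ meets $\Z^n$ iff $r \cdot (\text{the rational point } p_A \in A \cap U^\perp)$ is integral, and the set of such positive rationals $r$ is exactly $\frac{1}{q}\Z_{>0}$ where $q = \delta(A)$ is... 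I would pin this down: the set $\{r \in \Q_{>0} : rA \cap \Z^n \neq \emptyset\}$ is of the form $\{ r : r \equiv 0 \pmod{1/q_A}\}$? No — it is the set of $r$ such that $r \cdot p_A \in \Z^n$, which for $p_A$ having component denominators with lcm $q_A$ is $q_A^{-1}\Z_{>0}$ only if $p_A$'s coordinates have the right alignment; in general it is a coset-free sub-semigroup. The clean statement I want is: $\{r \in \Q_{>0}: r\,\aff(F)\cap\Z^n\neq\emptyset\}$ equals $\frac{1}{N}\Z_{>0}$ for some $N \in \Z_{>0}$ depending on $F$; then $\rind_i(P) = 1/\mathrm{lcm}_F(N_F)$ over $i$-faces, hmm, this needs care. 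The cleanest route: show $\rind_i(P) = \ind_i(P)/\d(P)$ or a similar closed form relating the rational index to the classical one, using Lemma \ref{lem:rat_dilations}-style scaling.

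So the concrete plan: (1) Prove that $\rind_i(P) = \ind_i(\d(P)\cdot P)/\d(P)$, or more directly that $\rind_i(P)\cdot\d(P) = \ind_i(\d(P)P)$, by observing that $r\,\aff(F)\cap\Z^n\neq\emptyset$ with $r = k/\d(P)$, $k\in\Z_{>0}$, is equivalent to $k\,\aff(\d(P)F)\cap\Z^n\neq\emptyset$, and that the minimizing $r$ is attained at such a $k/\d(P)$ because $\aff(F)$ is rational with all relevant denominators dividing $\d(P)$ — this last point ensures the minimum over $\Q_{>0}$ is actually a minimum over $\frac{1}{\d(P)}\Z_{>0}$. (2) Then $\rind_{i-1}(P)/\rind_i(P) = \ind_{i-1}(\d(P)P)/\ind_i(\d(P)P)$, which is an integer by the classical divisibility result for the integral indices of the integral polytope $\d(P)P$ (the analogue of Lemma \ref{lem:rat_divisors} in the integer setting, a standard fact from Ehrhart theory).

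The main obstacle I expect is step (1), specifically the claim that the minimum in the definition of $\rind_i(P)$ is attained on the lattice $\frac{1}{\d(P)}\Z_{>0}$ rather than at some smaller non-reduced rational. This requires showing that for a rational affine space $A = \aff(F)$ whose defining data has denominators dividing $\d(P)$, the set $\{r \in \Q_{>0} : rA \cap \Z^n \neq \emptyset\}$ has its infimum attained and lying in $\frac{1}{\d(P)}\Z_{>0}$; equivalently that if $rA$ meets $\Z^n$ then so does $\lceil r\d(P)\rceil\frac{1}{\d(P)} A$ — or rather that the set of valid $r$ is a sub-semigroup of $\frac{1}{\d(P)}\Z_{>0}$ closed under adding $\frac{1}{\d(P)}$-multiples, which follows from $\frac1{\d(P)}A$ containing a point whose coordinates are in $\frac1{\d(P)}\Z$. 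If this alignment argument turns out to need more hypotheses, the fallback is to prove Lemma \ref{lem:rat_divisors} by the direct geometric route: given $r = \rind_i(P)$, for each $(i-1)$-face $F'$ pick the $i$-face $F \supseteq F'$, use that $r\,\aff(F)$ has an integral point, and combine with the rationality of the translation between $\aff(F')$ and $\aff(F)$ inside $\aff(F)$ to deduce that an integer multiple of $r$ works for $F'$ — then take the lcm of those multiples over all $(i-1)$-faces and verify it gives exactly $\rind_{i-1}(P)$ and that it divides back appropriately. Either way the statement reduces, after the right reformulation, to the already-known integral case.
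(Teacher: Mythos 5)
Your primary route (step (1)) rests on an identity that is false, not merely delicate. First, the dilation bookkeeping is reversed: $k\,\aff(\d(P)F)=k\,\d(P)\aff(F)$, which is not $\frac{k}{\d(P)}\aff(F)$; dilating $P$ by $s$ \emph{divides} the rational index by $s$, so $\rind_i(\d(P)P)=\rind_i(P)/\d(P)$, and your claimed relation $\rind_i(P)\,\d(P)=\ind_i(\d(P)P)$ points the wrong way. More importantly, even after fixing the direction, the "alignment" you flagged as the main obstacle genuinely fails: the minimum defining $\rind_i(P)$ need not lie in $\frac{1}{\d(P)}\Z_{>0}$. Take $P=[0,2]\subset\R^1$: then $\d(P)=1$ and $\ind_0(P)=1$, but $\rind_0(P)=\tfrac12$ because $\tfrac12\cdot\{2\}=\{1\}$ already contains an integral point. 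The denominator of $\rind_i(P)$ is governed by how the affine hulls of the faces meet the lattice, not by the denominators of the vertices, so no rescaling by $\d(P)$ reduces the lemma to the classical integral divisibility.

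Your fallback is closer to the paper's actual proof, but its logic runs in the unproductive direction. The paper's argument is: for each face $F$ (whose affine hull misses the origin), the set $S_F=\{r\in\Q: r\,\aff(F)\cap\Z^n\neq\emptyset\}$ is a cyclic subgroup $r_F\Z$ of $\Q$, and $\rind_i(P)$ generates the intersection $\bigcap_F S_F$ over all $i$-faces. If $F'\subseteq F$ then $S_{F'}\subseteq S_F$ (an integral point of $r\,\aff(F')$ is one of $r\,\aff(F)$), so $r_{F'}$ is an integral multiple of $r_F$; since every $i$-face contains some $(i-1)$-face, $\rind_{i-1}(P)$ lies in $S_F$ for every $i$-face $F$, hence in $\rind_i(P)\Z$. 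Your version --- find, for each $(i-1)$-face $F'$, an integer multiple of $\rind_i(P)$ that works for $F'$, then take the lcm --- only shows that some integral multiple $M\,\rind_i(P)$ lies in $\bigcap_{F'}S_{F'}$, i.e.\ that $\rind_{i-1}(P)$ divides $M\,\rind_i(P)$. That conclusion is compatible with, say, $\rind_i(P)=1$, $M=6$, $\rind_{i-1}(P)=3/2$, and so does not yield $\rind_{i-1}(P)/\rind_i(P)\in\Z$. The missing idea is precisely the containment $S_{F'}\subseteq S_F$ together with the cyclicity of these groups.
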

\begin{proof}%{{{
Let $H^i_1,\ldots,H^i_{m(i)}$ be the respective affine hulls of the $m(i)$ $i$-faces of $P$ and let
$r^i_j$ be the smallest positive rational number such that $H^i_j$ contains integral points.
Then $rH^i_j$ contains integral points if and only if $r$ is an integral multiple of $r^i_j$, for $j=1,\ldots,m(j)$. Thus, 
$\rind_i(P)$ is the smallest positive rational number that is an integral multiple of all $r^i_j$.
Furthermore, since $H^{i-1}_j\subset H^{i}_{\tilde{\jmath}}$ for some $\tilde{\jmath}$, we have that $r^{i-1}_{j}$ is an
integral multiple of $r^{i}_{\tilde{\jmath}}$, and thus $\rind_{i-1}(P)$ is an integral multiple of $\rind_{i}(P)$.
%}}}
\end{proof}
Now we are able to prove that the rational indices are periods of the coefficients of the rational Ehrhart quasi-polynomials.
\begin{proof}[Proof of Corollary \ref{cor:periods}]%{{{
Since $\ind_i(\rind_i(P)P)=1$ for all $i$, we know that
\[\Grat_i(\rind_i(P)P,r+k)=\Grat_i(\rind_i(P)P,r),\quad\forall r\in\Q_{\geq_0},k\in\Z_{\geq_0}.\]
This implies, together with Lemma \ref{lem:rat_dilations},
\[\Grat_i(P,r\,\rind_i(P)+k\,\rind_i(P))=\Grat_i(P,r\,\rind_i(P)),\quad\forall r\in\Q_{\geq_0},k\in\Z_{\geq_0},\]
and thus
\[\Grat_i(P,\tilde{r}+k\,\rind_i(P))=\Grat_i(P,\tilde{r}),\quad\forall \tilde{r}\in\Q_{\geq_0},k\in\Z_{\geq_0}.\]
Furthermore, together with Lemma \ref{lem:rat_divisors}, we get that $\rind_0(P)/\rind_{i}(P)\in\Z$ for $i=1,\ldots,n$,
and thus $\rind_0(P)=\rd(P)$ is a period of $\Grat(P,\cdot)$.
%}}}S
\end{proof}

Concerning the minimal periods of Ehrhart quasi-polynomials, Beck, Sam and Woods \cite{Beck&Sam&Woods} showed that 
$\ind_{\dim(P)-1}(P)$ is in fact the minimal period of $\G_{\dim(P)-1}(P,\cdot)$. An analogous result is also true in the rational case:

\begin{corollary}%{{{
Let $P\subset\R^n$ be a rational polytope and let $\Grat(P,r)=\sum_{i=0}^{\dim(P)}\Grat_i(P,r)r^i$ be its rational Ehrhart quasi-polynomial.
Then $\rind_{\dim(P)-1}(P)$ is the minimal period of $\Grat_{\dim(P)-1}(P,\cdot)$.
%}}}
\end{corollary}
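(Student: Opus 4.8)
The plan is to reduce the rational statement to the known integral result of Beck, Sam and Woods via the dilation identity already established. Write $n=\dim(P)$ and set $q=\rind_{n-1}(P)$; by Corollary~\ref{cor:periods} we know $q$ is a period of $\Grat_{n-1}(P,\cdot)$, so the only thing to prove is minimality: if $p\in\Q_{>0}$ is any period of $\Grat_{n-1}(P,\cdot)$, then $q\mid p$ in the sense that $p/q\in\Z$, and in fact $p=q$ is impossible to improve below $q$. The key leverage is Lemma~\ref{lem:rat_dilations}: $\Grat_{n-1}(sP,r)=\Grat_{n-1}(P,sr)s^{n-1}$, which lets us move freely between $P$ and its dilates, together with the identity $\rind_{n-1}(sP)=\rind_{n-1}(P)/s$ for $s\in\Q_{>0}$ (this follows directly from the definition of $\rind_{n-1}$, since $r\aff(F)$ contains an integral point iff $(r s)\aff(sF^{-1}\!/s\text{-scaled face})$ does — more precisely the $(n-1)$-faces of $sP$ are $s$ times the $(n-1)$-faces of $P$).

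First I would pass to the integral polytope $P'=qP=\rind_{n-1}(P)P$. By construction $\rind_{n-1}(P')=1$, and since $P'$ is an integral polytope its classical Ehrhart quasi-polynomial coincides with the restriction of its rational one to $\Z_{\geq 0}$; moreover $\ind_{n-1}(P')=1$ as well (the affine hull of every $(n-1)$-face already contains integral points), consistent with the fact that for integral polytopes all $\G_i$ can still be genuinely periodic of period $>1$ — indeed $\ind_{n-1}(P')$ being $1$ does \emph{not} mean $\G_{n-1}(P',\cdot)$ is constant, and this is exactly the content we want to extract. The theorem of Beck--Sam--Woods applied to a suitable integral polytope tells us that the minimal integral period of $\G_{n-1}(\cdot,\cdot)$ equals the corresponding integral index. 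So the strategy is: suppose for contradiction that $\Grat_{n-1}(P,\cdot)$ has a period $p<q$; rescale by $s=1/p$ (or a common denominator version thereof) so that $\Grat_{n-1}(sP,\cdot)$ has period $1$, hence in particular $\G_{n-1}(sP,k)=\G_{n-1}(s P, k+1)$ for all integers — but $sP$ need not be integral, so one instead clears denominators and looks at $\Grat_{n-1}(P,\cdot)$ on the sublattice of $\Q$ generated by $p$ and $1/b$ where $b=\d(P)$, and uses the definition $\Grat_i(P,a/b)=\G_i(\tfrac1b P,a)b^i$ to translate a rational period into a genuine integral period of $\G_{n-1}(\tfrac1b P,\cdot)$, contradicting minimality there.

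More concretely, the bridge step is: $p$ being a period of $\Grat_{n-1}(P,\cdot)$ forces $bp\in\Z$ to be a period of $\G_{n-1}(\tfrac1b P,\cdot)$, because $\Grat_{n-1}(P,\tfrac{a}{b}) = \G_{n-1}(\tfrac1b P,a)b^i$ and $\Grat_{n-1}(P,\tfrac{a}{b}+p)=\Grat_{n-1}(P,\tfrac{a+bp}{b})=\G_{n-1}(\tfrac1b P,a+bp)b^i$. By Beck--Sam--Woods applied to the rational polytope $\tfrac1b P$, the minimal period of $\G_{n-1}(\tfrac1b P,\cdot)$ is exactly $\ind_{n-1}(\tfrac1b P)$. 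Hence $\ind_{n-1}(\tfrac1b P)\mid bp$. Finally one checks the bookkeeping identity $\rind_{n-1}(P) = \ind_{n-1}(\tfrac1b P)/b$: the $(n-1)$-faces of $\tfrac1b P$ are $\tfrac1b$ times those of $P$, so $k\aff(\tfrac1b F)$ meets $\Z^n$ iff $(k/b)\aff(F)$ does, giving $\rind_{n-1}(P)=\min\{k/b : k\in\Z_{>0}, k\aff(\tfrac1bF)\cap\Z^n\neq\emptyset\ \forall F\} = \ind_{n-1}(\tfrac1bP)/b$. Combining, $q=\rind_{n-1}(P)=\ind_{n-1}(\tfrac1bP)/b \le bp/b = p$, contradicting $p<q$. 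Therefore $q$ is the minimal period.

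I expect the main obstacle to be purely notational: keeping straight which polytope's index/denominator one is working with through the rescalings, and confirming that the Beck--Sam--Woods result is stated for arbitrary \emph{rational} polytopes (it is, in \cite{Beck&Sam&Woods}) rather than only integral ones, so that it can be applied to $\tfrac1b P$ directly without a further reduction. A secondary subtlety is the edge case $n-1=0$, i.e.\ $\dim(P)=1$, where $\rind_0(P)=\rd(P)$ and the claim says $\rd(P)$ is the minimal period of $\Grat_0(P,\cdot)=\Grat(P,\cdot)$; this is consistent with McAllister--Woods' observation that no period collapse occurs in dimension $1$, and the same rescaling argument applies verbatim, but it is worth a sentence of comment. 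No genuinely hard inequality or new combinatorial input is needed — the corollary is a transport of an existing theorem across the rational/integral correspondence set up in Theorem~\ref{thm:rat_dilations}.
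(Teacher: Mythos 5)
Your argument is essentially the paper's proof: both transport a hypothetical period $p<\rind_{n-1}(P)$ of $\Grat_{n-1}(P,\cdot)$ to an integral period of the classical coefficient $\G_{n-1}\bigl(\tfrac1b P,\cdot\bigr)$ of a dilate via $\Grat_i(P,a/b)=\G_i(\tfrac1b P,a)b^i$ (equivalently Lemma~\ref{lem:rat_dilations}), invoke the Beck--Sam--Woods minimality result for the rational polytope $\tfrac1b P$, and conclude by comparing with the rational index through $\ind_{n-1}\geq\rind_{n-1}$. Two small slips, neither fatal: with $b=\d(P)$ the product $bp$ need not be an integer, so $b$ must be taken to be (a multiple of) the denominator of $p$ --- the paper simply writes $p=s/t$ and dilates by $1/t$ --- and your ``bookkeeping identity'' $\rind_{n-1}(P)=\ind_{n-1}(\tfrac1b P)/b$ is in general only the inequality $\rind_{n-1}(P)\leq\ind_{n-1}(\tfrac1b P)/b$ (the integral index is merely one feasible candidate for the rational minimum), which is however exactly the direction your final chain of estimates requires.
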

\begin{proof}%{{{
By Lemma \ref{lem:rat_dilations} and since $\rind_{\dim(P)-1}$ is homogeneous, it suffices to show the statement for all $P$
with $\rind_{\dim(P)-1}(P)=1$. Thus we assume that $\frac{s}{t}<1$ is a period of $\Grat_{\dim(P)-1}(P,\cdot)$
with $s,t\in\Z_{>0}$, that is $\Grat_{\dim(P)-1}(P,r)=\Grat_{\dim(P)-1}\left(P,r+\frac{s}{t}\right)$ for all $r\in\Q_{\geq 0}.$
Again by Lemma \ref{lem:rat_dilations} we get
\begin{equation*}
\Grat_{\dim(P)-1}\left(\frac{1}{t}P,rt\right)=\Grat_{\dim(P)-1}\left(\frac{1}{t}P,rt+s\right)\quad\text{for all }r\in\Q_{\geq 0}.
\end{equation*}
In particular this is true for all $rt\in\Z_{\geq 0}$, and
thus, $s$ is a period of $\G_{\dim(P)-1}\left(\frac{1}{t}P,\cdot\right)$, which is a contradiction, since
\begin{equation*}
\ind_{\dim(P)-1}\left(\frac{1}{t}P\right)\geq\rind_{\dim(P)-1}\left(\frac{1}{t}P\right)=t\,\rind_{\dim(P)-1}(P)=t>s.\qedhere
\end{equation*}
%}}}
\end{proof}

We can also prove the Ehrhart reciprocity law for rational Ehrhart quasi-polynomials. We refer to Beck and Robins \cite[Chapter 4]{Beck&Robins2006}
for details on Ehrhart reciprocity law.

\begin{proof}[Proof of Corollary~\ref{cor:reci}]%{{{
Let $r=\frac{a}{b}$ with $a,b\in\Z_{\geq 0}$. Then, by Ehrhart reciprocity law and Lemma \ref{lem:rat_dilations}, we get
\begin{equation*}\begin{split}
\#\left(\frac{a}{b}\,\inner(P)\cap\Z^n\right)
&=(-1)^{\dim(P)}\sum_{i=0}^{\dim(P)}\G_i\left(\frac{1}{b}P,-a\right)(-a)^i\\
&=(-1)^{\dim(P)}\sum_{i=0}^{\dim(P)}\Grat_i\left(P,-\frac{a}{b}\right)\left(\frac{1}{b}\right)^i(-a)^i\\
&=(-1)^{\dim(P)}\Grat\left(P,-\frac{a}{b}\right).\qedhere
\end{split}\end{equation*}
%}}}
\end{proof}

For the proof of Theorem \ref{thm:polycoeffs} we need the following Lemma:

\begin{lemma}\label{qp_const_lemma}%{{{
Let $p:\Q\to\Q$ be a rational quasi-polynomial of degree $n\in\Z_{>0}$ with period $d\in\Q_{>0}$ and constant leading coefficient, that is,
\[p(r)=p_nr^n+p_{n-1}(r)r^{n-1}+p_{n-2}(r)r^{n-2}+\ldots+p_1(r)r+p_0(r),\]
where $0\neq p_n\in\Q$ and $p_i:\Q\to\Q$ are periodic functions with period $d$ for $i=0,\ldots,n-1$. Furthermore, 
suppose there exist an interval $(r_1,r_2)\subset \Q$ and $c_k\in\Q$ for $k\in\Z_{\geq 0}$ such that
\[p(r+kd)=c_k,\quad\forall r\in (r_1,r_2), \forall k\in\Z_{\geq 0}.\]
Then $p_i:(r_1,r_2)\to\Q$ is a polynomial of degree $n-i$. Furthermore, if $p_n>0$ then $p_{n-1}$ has negative leading
coefficient.
%}}}
\end{lemma}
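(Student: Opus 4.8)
The plan is to pin down a base point $r_0\in(r_1,r_2)$ and show that the polynomial $u\mapsto\sum_j p_j(r_0)u^j$ determines every $p_i$ on the whole interval via a shift of the variable; once this is known, each $p_i|_{(r_1,r_2)}$ is visibly a polynomial of the right degree, and the leading coefficient can be read off at a glance.

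First I would write, for an arbitrary $r\in(r_1,r_2)$ and $k\in\Z_{\geq 0}$,
\[c_k=p(r+kd)=\sum_{i=0}^{n}p_i(r+kd)(r+kd)^i=\sum_{i=0}^{n}p_i(r)(r+kd)^i,\]
using that $kd$ is an integer multiple of the period $d$. For fixed $r$ the right-hand side is a polynomial in $k$ of degree exactly $n$, its top coefficient being $p_nd^n\neq 0$. Since $c_k$ does not depend on $r$, for any $r,r'\in(r_1,r_2)$ the two polynomials $\sum_i p_i(r)(r+kd)^i$ and $\sum_i p_i(r')(r'+kd)^i$ agree on all of $\Z_{\geq 0}$, hence coincide identically; replacing the integers $kd$ by a formal variable $s$ this reads
\[\sum_{i=0}^{n}p_i(r)(r+s)^i=\sum_{i=0}^{n}p_i(r_0)(r_0+s)^i\qquad\text{in }\Q[s].\]

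Next I would substitute $s=u-r$ and set $g(u):=\sum_{i=0}^{n}p_i(r_0)u^i=\sum_{j=0}^{n}b_ju^j$, so that $b_n=p_n$. The identity above becomes $\sum_{i=0}^{n}p_i(r)u^i=g\bigl(u-(r-r_0)\bigr)$ in $\Q[u]$. Expanding the right-hand side by the binomial theorem and comparing coefficients of $u^i$ yields the explicit formula
\[p_i(r)=\sum_{j=i}^{n}\binom{j}{i}b_j(r_0-r)^{j-i},\qquad r\in(r_1,r_2).\]
This exhibits $p_i$ on $(r_1,r_2)$ as a polynomial in $r$ of degree at most $n-i$; its coefficient of $r^{n-i}$ comes only from the term $j=n$ and equals $(-1)^{n-i}\binom{n}{i}p_n\neq 0$, so the degree is exactly $n-i$. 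For $i=n-1$ the leading coefficient is $-np_n$, which is negative whenever $p_n>0$, giving the final assertion.

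There is no serious obstacle here; the only points needing care are the justification that a degree-$n$ polynomial is determined by its values on $\Z_{\geq 0}$ (so the identity in $\Q[s]$ is legitimate) and the bookkeeping in the change of variables. One could instead avoid the substitution and argue by downward induction on $i$: comparing coefficients of $k^j$ in $c_k=\sum_i p_i(r)(r+kd)^i$ shows that $\sum_{i=j}^{n}\binom{i}{j}p_i(r)r^{i-j}$ is constant on $(r_1,r_2)$ for each $j$, and solving successively for $p_n,p_{n-1},\dots$ recovers the same statement; this variant relies on the identity $\sum_{l=i}^{n}\binom{n}{l}\binom{l}{i}(-1)^{n-l}=\binom{n}{i}(1-1)^{n-i}$, which vanishes for $i<n$, and is the only real computation in that route.
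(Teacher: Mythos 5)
Your proof is correct, and it takes a genuinely different route from the paper. The paper argues by induction on $n$: it forms the finite difference $q(r)=p(r+d)-p(r)$, which is a quasi-polynomial of degree $n-1$ with constant leading coefficient $p_nnd$ and is again locally constant along the arithmetic progressions $r+m d$, applies the induction hypothesis to the coefficients $q_j$, and then runs a second (downward) induction to unravel the triangular system expressing the $q_j$ in terms of the $p_i$. You instead observe directly that for fixed $r$ the map $k\mapsto\sum_i p_i(r)(r+kd)^i$ is a polynomial in $k$ agreeing with the $r$-independent sequence $c_k$ on all of $\Z_{\geq 0}$, so the polynomial $\sum_i p_i(r)u^i$ is just the shift $g(u-(r-r_0))$ of a single fixed polynomial $g$; the binomial expansion then hands you the closed formula $p_i(r)=\sum_{j=i}^{n}\binom{j}{i}b_j(r_0-r)^{j-i}$, from which the degree $n-i$ and the leading coefficient $(-1)^{n-i}\binom{n}{i}p_n$ (hence $-np_n$ for $i=n-1$) are immediate. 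Your argument is shorter, avoids both nested inductions, and yields explicit coefficients; it is essentially the same device the paper deploys later in the proof of Theorem \ref{thm:derivatives} (where $\Grat(P,\tilde r+k\,\rd(P))$ is rewritten as $\sum_i\Grat_{i,0}(r-\tilde r)^i$), so in effect you prove the lemma and that expansion in one stroke. The two points you flag as needing care --- that a polynomial of degree at most $n$ is determined by its values on $\Z_{\geq 0}$, and the change of variables from $k$ to $s=kd$ (legitimate since $d\neq 0$) --- are indeed the only ones, and both are fine.
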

\begin{proof}%{{{
We prove this result by induction with respect to $n$. For $n=1$ we have $c_0=p(r)=p_1r+p_0(r)$ for all $r\in(r_1,r_2)$.
Thus, $p_0(r)=c_0-p_1r$ for $r\in(r_1,r_2)$, which is a polynomial of degree $n-0=1$ with negative leading coefficient.\\
Now let $n>1$. We have
\begin{equation*}
c_k=p_n\cdot(r+kd)^n+\sum_{i=0}^{n-1}p_i(r)(r+kd)^i\quad\forall r\in(r_1,r_2), \forall k\in\Z_{\geq 0}.
\end{equation*}
Then $q:\Q\to\Q$ with $q(r):=p_n\cdot\left((r+d)^n-r^n\right)+\sum_{i=0}^{n-1}p_i(r)\left((r+d)^i-r^i\right)$ is a
quasi-polynomial of degree $n-1$ with period $d$ and constant leading coefficient, and
\begin{equation*}\begin{split}
q(r+md)&=p_n\cdot\left((r+(m+1)d)^n-(r+md)^n\right)\\
&\hspace{1cm}+\sum_{i=0}^{n-1}p_i(r)\left((r+(m+1)d)^i-(r+md)^i\right)\\
&=c_{m+1}-c_m\quad\forall r\in(r_1,r_2), \forall m\in\Z_{\geq 0}.
\end{split}\end{equation*}
Thus, we can use the induction hypothesis for $q$, and together with
\begin{equation*}\begin{split}
q(r)&=p_n\cdot\left((r+d)^n-r^n\right)+\sum_{i=0}^{n-1}p_i(r)\left((r+d)^i-r^i\right)\\
&=p_nndr^{n-1} + \sum_{j=0 }^{n-2}\left(p_n\binom{n}{j}d^{n-j}+\sum_{i=j+1 }^{n-1}p_i(r)\binom{i}{j}d^{i-j}\right)r^j\\
\end{split}\end{equation*} 
we get that 
\begin{equation*}
q_j(r):=p_n\binom{n}{j}d^{n-j}+\sum_{i=j+1 }^{n-1}p_i(r)\binom{i}{j}d^{i-j}
\end{equation*}
is a polynomial of degree $n-1-j$
for $r\in(r_1,r_2)$, for all $j=0,\ldots,n-2$. Since $p_nnd>0$ we get, also by induction, that 
$q_{n-2}(r)=p_n\binom{n}{2}d^2+p_{n-1}(r)\binom{n-1}{n-2}d$ has a negative leading coefficient. \\
Now we use induction again to show 
that $p_{j+1}$ is a polynomial of degree $n-j-1$ 
for $r\in(r_1,r_2)$.
For $j=n-2$ we have that $q_{n-2}(r)=p_n\binom{n}{2}d^2+p_{n-1}(r)\binom{n-1}{n-2}d$ is a polynomial of degree $1$ with negative leading coefficient,
hence the same is true for $p_{n-1}$.
For $j<n-2$ write
\begin{equation*}
q_j(r)=p_n\binom{n}{j}d^{n-j}+\sum_{i=j+1 }^{n-1}p_i(r)\binom{i}{j}d^{i-j}=\sum_{i=0}^{n-j-1}\alpha_ir^i\quad \forall r\in(r_1,r_2). 
\end{equation*}
Then for $r\in(r_1,r_2)$,
\begin{equation*}
p_{j+1}(r)(j+1)d=\sum_{i=0}^{n-j-1}\alpha_ir^i-\sum_{i=j+2 }^{n-1}p_i(r)\binom{i}{j}d^{i-j}-p_n\binom{n}{j}d^{n-j}
\end{equation*}
which is a polynomial of degree $n-j-1$ since $p_i(r)$ is a polynomial of degree $n-i$ for $i\geq j+2$ by induction
hypothesis.\newline 
We conclude that $p_i(r)$ is a polynomial of degree $n-i$ for $r\in(r_1,r_2)$ and $i=1\ldots,n-1$. That $p_0(r)$ is
a polynomial follows immediately from $p_0(r)=c_0-p_nr^n-\sum_{i=1}^{n-1}p_i(r)r^i$.
%}}}
\end{proof}

Next we show that Ehrhart quasi-polynomials of full-dimensional rational polytopes satisfy the setting in Lemma \ref{qp_const_lemma}.
\begin{proof}[Proof of Theorem \ref{thm:polycoeffs}]%{{{
By Theorem \ref{thm:rat_dilations}, $\Grat(P,r)$ is a rational quasi-polynomial of degree $n$ with 
period $\rd(P)$ and constant, nonzero leading coefficient. To apply Lemma \ref{qp_const_lemma} it remains to show that there exist 
$0=r_0<r_1<\ldots<r_l=\rd(P)$ such that $\Grat(P,r)$ is constant for $r\in (r_i+k\rd(P),r_{i+1}+k\rd(P))$ and $i=0,\ldots,l-1$, $k\in\Z_{\geq 0}$.
To this end we consider $\Grat(P,r)$ as a function $\Q_{\geq 0}\to \Q$. $\Grat(P,r)$ is certainly piecewise constant, and it 
jumps whenever integral points leave or enter $rP$, which can only happen if one of the facets of $rP$ lies
in a hyperplane containing integral points. Thus, for every facet $F$ of $P$ let $\alpha_F$ be the smallest positive rational number
such that $\alpha_FF$ lies in a hyperplane containing integral points. Then $\{k\alpha_F: F\text{ facet of }P, k\in\Z_{\geq 0}\}$
are the only possible jump discontinuities of $\Grat(P,r)$. By the definition of $\rd(P)$, for a facet $F$ of $P$ there exists a $k_F\in\Z$
such that $k_F\alpha_F=\rd(P)$. Thus for $\{r_0,\ldots,r_l\}=\{k\alpha_F: k=0,\ldots,k_F, F\text{ facet of }P\}$ we can apply
Lemma~\ref{qp_const_lemma}. 
%}}}
\end{proof}

We refer to Figure \ref{g01pic} in Section \ref{sec:dim2} for a visualization of the $\Grat_i(P,\cdot)$.

\begin{remark}%{{{
The rational Ehrhart quasi-polynomials can be extended to real quasi-polynomials 
$\Grat(P,\cdot):\R_{\geq 0}\to\Z_{\geq 0}$.
To do that, for $r_0\in\R_{\geq 0}\setminus\Q$ let $r_j\in\Q_{\geq 0}$, $j\geq 1$ with $r_0=\lim_{j\to\infty}r_j\in\R$. We define 
$\Grat_i(P,r_0):=\lim_{j\to\infty}\Grat_i(P,r_j)$. This limit exists since $\Grat_i(P,\cdot)$ is piecewise continuous and,
for $j$ large enough, all $r_j$ are contained in the same continuous part of $\Grat_i(P,\cdot)$. Then, since $\Grat(P,\cdot):\R_{\geq 0}\to\Z_{\geq 0}$
only jumps for rational points, we get
\begin{equation*}\begin{split}
\#(r_0P\cap\Z^n)&=\lim_{j\to\infty}\#(r_jP\cap\Z^n)=\lim_{j\to\infty}\sum_{i=0}^n\Grat_i(P,r_j)r_j^i\\
&=\sum_{i=0}^n\lim_{j\to\infty}\Grat_i(P,r_j)r_j^i=\sum_{i=0}^n\Grat_i(P,r_0)r_0^i.
\end{split}\end{equation*}
Baldoni et al.~\cite{KOEPPEetAl2011} generalized this statement to intermediate sums of rational polytopes
and gave an efficient algorithm to compute these real quasi-polynomials.
%}}}
\end{remark}
Now we show that the coefficients $\Grat_i(P,\cdot)$ 
are derivatives of each other.
\begin{proof}[Proof of Theorem \ref{thm:derivatives}]%{{{
Let $0=r_0<r_1<\ldots<r_l=\rd(P)$ be as in the proof of Theorem \ref{thm:polycoeffs}, and for $m=1,\ldots,l$, $k\in\Z_{\geq 0}$ let
\[c_{m,k}=\Grat(P,r)=\sum_{i=0}^n\Grat_i(P,r)r^i\quad \text{for }r\in(r_{m-1}+k\,\rd(P),r_{m}+k\,\rd(P)).\] 
Since $\Grat_i(P,r)$ is a polynomial of degree $n-i$ in $r$, we can write it as
\begin{equation*}
\Grat_i(P,r)=\sum_{j=0}^{n-i}\Grat_{i,j}r^j.
\end{equation*}
Since $\Grat_i(P,r)$ are periodic with period $\rd(P)$, we can write $r=\tilde{r}+k\,\rd(P)$ with $k\in\Z_{\geq 0}$ and 
$r_{m-1}\leq \tilde{r}<r_{m}$ for some $m=1,\ldots,l$ and get
\begin{equation*}\begin{split}
c_{m,k}&=\sum_{i=0}^n\Grat_i(P,\tilde{r})(\tilde{r}+k\,\rd(P))^i
=\sum_{i=0}^n\sum_{j=0}^{n-i}\Grat_{i,j}\tilde{r}^j(\tilde{r}+k\,\rd(P))^i\\
&=\sum_{h=0}^n\sum_{i=0}^{n}\sum_{j=\max(0,h-i)}^{\min(h,n-i)}\binom{i}{h-j}\Grat_{i,j}(k\,\rd(P))^{i-h+j}\tilde{r}^{h},\\
\end{split}
\end{equation*}
which is a constant polynomial in $\tilde{r}$. Thus, for $h\neq 0$,
\begin{equation*}
\sum_{i=0}^{n}\sum_{j=\max(0,h-i)}^{\min(h,n-i)}\binom{i}{h-j}\Grat_{i,j}(k\,\rd(P))^{i-h+j}=0
\end{equation*}
and therefore
\begin{equation*}
\Grat(P,\tilde{r}+k\,\rd(P))
=\sum_{i=0}^{n}\Grat_{i,0}(k\,\rd(P))^{i}=\sum_{i=0}^{n}\Grat_{i,0}(r-\tilde{r})^{i}.
\end{equation*}
Expanding to the quasi-polynomial form yields
\begin{equation*}
\begin{split}
\Grat(P,\tilde{r}+k\,\rd(P))&=\sum_{i=0}^{n}\Grat_{i,0}\sum_{j=0}^i\binom{i}{j}r^j\tilde{r}^{i-j}(-1)^{i-j}\\
&=\sum_{j=0}^{n}\left(\sum_{i=0}^{n-j}\binom{i+j}{j}\Grat_{i+j,0}(-1)^{i}\tilde{r}^{i}\right)r^j.
\end{split}
\end{equation*}
This implies that for all $\tilde{r}\in (r_{m-1},r_m)$, $m=1,\ldots,l$,
\begin{equation*}
\Grat_j(P,\tilde{r})=\sum_{i=0}^{n-j}\binom{i+j}{j}\Grat_{i+j,0}(-1)^{i}\tilde{r}^{i}
\end{equation*}
and the claim follows by differentiation. 
%%}}}
\end{proof}
%}}}
%%%%%%%%%%%%%%%%%%%%%%%%%%%%%%%%%%%%%%%%%%%%%%%%%%%%%%%%%%%%%%%%%%%%%%%%%%%%%%%%%%%%%%%%%%%%%%%%%%%%%%%%%%%%%%%
\section{Dimension 2}\label{sec:dim2}%{{{
In what follows, we denote by $\gauss{.}$ the floor function, that is, $\gauss{x}$ is the largest integer not greater than $x$,
by $\lceil.\rceil$ the ceiling function, that is, $\lceil x\rceil$ is the smallest integer not smaller than $x$,
and by $\rat{.}$ the fractional part, that is, $\rat{x}=x-\gauss{x}$.
For the following calculations, we mention the following fact: Let $n,m,t,r\in\Z$, $m>0$ and $t\equiv r\bmod m$. Then
$\gauss{\frac{nt}{m}}=\frac{nt}{m}-\rat{\frac{nr}{m}}$ and
$\left\lceil\frac{nt}{m}\right\rceil=\frac{nt}{m}+\rat{-\frac{nr}{m}}$.

First, we consider $2$-dimensional triangles of the form
$T=\conv\left(\binom{0}{0},\binom{x_1}{y},\binom{x_2}{y}\right)$,
where $x_1<x_2\in\Q$ and $y\in\Q_{>0}$.
\begin{figure}[ht]\centering
\includegraphics{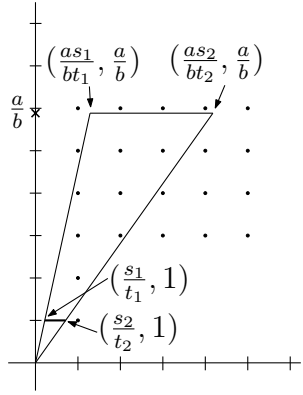}
\caption{Triangle $T$}\label{triangle2d}
\end{figure}
\begin{theorem}\label{thm_2d_triangle}%{{{
Let $T=\conv\left(\mybinom{0}{0},\mybinom{\frac{s_1}{t_1}\frac{a}{b}}{\frac{a}{b}},\mybinom{\frac{s_2}{t_2}\frac{a}{b}}{\frac{a}{b}}\right)$
 with $a,b,t_1,t_2\in\Z_{>0}$, $s_1,s_2\in\Z$, $\frac{s_2}{t_2}>\frac{s_1}{t_1}$, and $\gcd(a,b)=\gcd(s_1,t_1)=\gcd(s_2,t_2)=1$.
Then for $r\in\Q_{\geq 0}$ the following hold:
\begin{equation*}\begin{split}
\mathrm{(i)}\quad\Grat_2(T,r)&=\frac12\frac{a^2}{b^2}\left(\frac{s_2}{t_2}-\frac{s_1}{t_1}\right)\\
\mathrm{(ii)}\quad\Grat_1(T,r)&=\frac{a}{b}\left(\frac{t_1+t_2}{2t_1t_2}-\left(\rat{\frac{ar}{b}}-\frac12\right)\left(\frac{s_2}{t_2}-\frac{s_1}{t_1}\right)\right) \\
\mathrm{(iii)}\quad\Grat_0(T,r)&=1
-\frac{1}{2}\rat{\frac{ar}{b}}\left(\frac{s_2}{t_2}-\frac{s_1}{t_1}+2\right)
+\frac{1}{2}\rat{\frac{ar}{b}}^2\left(\frac{s_2}{t_2}-\frac{s_1}{t_1}\right)\\
&\quad+\rat{\frac{ar}{b\lcm{t_1,t_2}} }\lcm{t_1,t_2}\left(\frac{t_2-1}{2t_2}+\frac{t_1-1}{2t_1}\right)\\
&\quad-\sum_{i=0}^{\scriptscriptstyle\rat{\gauss{\frac{ar}{b}} /\lcm{t_1,t_2} }\lcm{t_1,t_2}}
\left(\frac{s_2i}{t_2}-\gauss{\frac{s_2i}{t_2}}+\left\lceil\frac{s_1i}{t_1}\right\rceil-\frac{s_1i}{t_1}\right).
\end{split}\end{equation*}
%}}}
\end{theorem}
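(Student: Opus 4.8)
The plan is to compute $\#(rT\cap\Z^2)$ directly by slicing $rT$ with horizontal integer lines, to bring the resulting sum of floor and ceiling functions into closed form, and then to read off (i)--(iii) by collecting powers of $r$; a short uniqueness argument then identifies what we get with the coefficients $\Grat_i(T,\cdot)$ from the proof of Theorem~\ref{thm:rat_dilations}. By similar triangles, the points of $rT$ at height $h$ (for $0\le h\le\tfrac{ar}{b}$) form the segment $\{h\}\times[\tfrac{s_1}{t_1}h,\tfrac{s_2}{t_2}h]$, which is nonempty since $\tfrac{s_2}{t_2}>\tfrac{s_1}{t_1}$. Hence, with $m:=\gauss{\tfrac{ar}{b}}$,
\[
\Grat(T,r)=\sum_{h=0}^{m}\Bigl(\gauss{\tfrac{s_2h}{t_2}}-\bigl\lceil\tfrac{s_1h}{t_1}\bigr\rceil+1\Bigr).
\]
Writing $\gauss{x}=x-\rat{x}$, $\lceil x\rceil=x+\rat{-x}$ and using $\sum_{h=0}^m h=\tfrac{m(m+1)}2$ turns this into
\[
\Grat(T,r)=(m+1)+\tfrac{m(m+1)}{2}\Bigl(\tfrac{s_2}{t_2}-\tfrac{s_1}{t_1}\Bigr)-\sum_{h=0}^{m}\rat{\tfrac{s_2h}{t_2}}-\sum_{h=0}^{m}\rat{-\tfrac{s_1h}{t_1}}.
\]

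I would then evaluate the two periodic sums using $L:=\lcm{t_1,t_2}$. Since $\gcd(s_j,t_j)=1$, over any block of $L$ consecutive values of $h$ the quantities $\rat{s_2h/t_2}$ run through $L/t_2$ copies of $\{0,\tfrac1{t_2},\dots,\tfrac{t_2-1}{t_2}\}$ and so sum to $\tfrac{L(t_2-1)}{2t_2}$, and likewise for the $t_1$-sum. Writing $m=L\gauss{m/L}+\mu$ with $\mu:=\gauss{\tfrac{ar}{b}}\bmod L=\rat{\gauss{ar/b}/\lcm{t_1,t_2}}\lcm{t_1,t_2}$ and using $\gauss{m/L}=\gauss{\gauss{ar/b}/L}=\gauss{\tfrac{ar}{bL}}$, each sum becomes $\gauss{\tfrac{ar}{bL}}$ times its block value plus a partial block over $i=0,\dots,\mu$; the two partial blocks combine, via $\rat{s_2i/t_2}+\rat{-s_1i/t_1}=\tfrac{s_2i}{t_2}-\gauss{\tfrac{s_2i}{t_2}}+\lceil\tfrac{s_1i}{t_1}\rceil-\tfrac{s_1i}{t_1}$, into the sum displayed in (iii). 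Finally I would substitute $m=\tfrac{ar}{b}-\rat{\tfrac{ar}{b}}$ and $\gauss{\tfrac{ar}{bL}}=\tfrac{ar}{bL}-\rat{\tfrac{ar}{bL}}$ everywhere and expand. The coefficient of $r^2$ collapses to $\tfrac{a^2}{2b^2}(\tfrac{s_2}{t_2}-\tfrac{s_1}{t_1})$, which is (i); the coefficient of $r$, after using $1-\tfrac{t_2-1}{2t_2}-\tfrac{t_1-1}{2t_1}=\tfrac{t_1+t_2}{2t_1t_2}$, is the expression in (ii); and the remaining $r$-free part, after grouping the $\rat{ar/b}$-terms into $1-\tfrac12\rat{ar/b}(\delta+2)+\tfrac12\rat{ar/b}^2\delta$ with $\delta=\tfrac{s_2}{t_2}-\tfrac{s_1}{t_1}$, is the expression in (iii).

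It remains to check that these three functions really are the coefficients $\Grat_i(T,\cdot)$ and not merely one admissible decomposition of $\Grat(T,r)$. All three right-hand sides are periodic in $r$: each is built from $\rat{ar/b}$, $\rat{ar/(bL)}$ and $\gauss{ar/b}\bmod L$, and since $a\in\Z$ all of these have period $bL$. By Corollary~\ref{cor:periods} the functions $\Grat_i(T,\cdot)$ are periodic as well, so we may choose a common period $d$ of all six. Subtracting the two decompositions of $\Grat(T,r)=\sum_i\Grat_i(T,r)r^i$ (Theorem~\ref{thm:rat_dilations}) gives $\sum_{i=0}^2 g_i(r)r^i\equiv 0$ with each $g_i$ periodic of period $d$; fixing $r_0\in\Q_{\ge 0}$ and letting $r=r_0+kd$ range over $k\in\Z_{\ge 0}$ yields a polynomial in $k$ that vanishes identically, whence $g_2(r_0)=0$, then $g_1(r_0)=g_0(r_0)=0$. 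Hence the right-hand sides of (i)--(iii) equal $\Grat_2(T,r)$, $\Grat_1(T,r)$, $\Grat_0(T,r)$.

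The main obstacle is the bookkeeping in the two middle steps: carrying out the $\lcm{t_1,t_2}$-grouping so that the block contribution and the remainder sum emerge in exactly the stated normalization --- in particular producing the factor $\rat{ar/(b\lcm{t_1,t_2})}\lcm{t_1,t_2}$ and the precise upper summation index $\rat{\gauss{ar/b}/\lcm{t_1,t_2}}\lcm{t_1,t_2}$ in (iii) --- and keeping track of which expanded monomials land in the $r^2$-, $r^1$- and $r^0$-part. A useful sanity check along the way is Theorem~\ref{thm:derivatives}: on every interval where $\rat{ar/b}$ is affine one must have $\Grat_1'(T,r)=-2\Grat_2(T,r)$ and $\Grat_0'(T,r)=-\Grat_1(T,r)$, and both of these follow at once by differentiating (i)--(iii) and using $1-\tfrac{t_2-1}{2t_2}-\tfrac{t_1-1}{2t_1}=\tfrac{t_1+t_2}{2t_1t_2}$ once more.
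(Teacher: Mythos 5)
Your proposal is correct and follows essentially the same route as the paper: slice $rT$ at integer heights to get $\sum_{h=0}^{\gauss{ar/b}}\bigl(\gauss{s_2h/t_2}-\lceil s_1h/t_1\rceil+1\bigr)$, evaluate the fractional-part sums by splitting into full $\lcm{t_1,t_2}$-blocks plus a partial block, and collect powers of $r$. The only addition is your explicit uniqueness argument identifying the resulting decomposition with the $\Grat_i$, which the paper leaves implicit (relying on the standard uniqueness of quasi-polynomial representations); that is a harmless and correct refinement.
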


\begin{proof}%{{{
In what follows, we determine $\Grat(T,t)=\sum_{i=0}^{\gauss{at/b}}\Grat(Q,i)$, $t\in\Q_{\geq 0}$,
$Q=\conv\left(\mybinom{\frac{s_1}{t_1}}{1},\mybinom{\frac{s_2}{t_2}}{1}\right)$, (see Figure \ref{triangle2d},
\cite{Sam&Woods2009}).
For abbreviation we write $l$ instead of $\lcm{t_1,t_2}$ and $r$ for an arbitrary integer number with $r\equiv t\mod lb$.
It is
\begin{equation*}\begin{split}
\Grat(Q,t)&=\#(tQ\cap\Z^2)=\gauss{\frac{s_2t}{t_2}}-\left\lceil\frac{s_1t}{t_1}\right\rceil+1\\
&=\left(\frac{s_2}{t_2}-\frac{s_1}{t_1}\right)t-\left(\rat{\frac{s_2r}{t_2}}+\rat{-\frac{s_1r}{t_1}}\right)+1.
\end{split}\end{equation*}  
This implies
\begin{equation}\label{eq:2d-theorem}
\Grat(T,t)
=\sum_{i=0}^{\gauss{at/b}}\left(\frac{s_2}{t_2}-\frac{s_1}{t_1}\right)i-\left(\rat{\frac{s_2i}{t_2}}
+\rat{1-\frac{s_1i}{t_1}}\right)+1.
\end{equation}
Since $\gauss{at/b}=\frac{at}{b}-\rat{\frac{ar}{b}}$, the first part can be written as
\begin{equation*}
\sum_{i=0}^{\gauss{at/b}}i=t^2\frac{a^2}{2b^2}
+t\frac{a}{b}\left(\frac12-\rat{\frac{ar}{b}}\right)
+\frac12\rat{\frac{ar}{b}}^2-\frac12\rat{\frac{ar}{b}}.
\end{equation*}
For the second part, we remark that $\rat{\frac{s_2i}{t_2}}$ is periodic with period $t_2$ and
\begin{equation*}
\sum_{i=0}^{l-1}\rat{\frac{s_2i}{t_2}}=\frac{l}{t_2}\sum_{i=0}^{t_2-1}\rat{\frac{s_2i}{t_2}}
=\frac{l(t_2-1)}{2t_2}.
\end{equation*}
Thus, we get
\begin{equation*}\begin{split}
\sum_{i=0}^{\gauss{at/b}}\rat{\frac{s_2i}{t_2}}
&=\gauss{\frac{ \gauss{\frac{at}{b}} } {l}}\sum_{i=0}^{l-1}\rat{\frac{s_2i}{t_2}}
+\sum_{i=0}^{\scriptscriptstyle\rat{\gauss{\frac{ar}{b}} /l }l}\rat{\frac{s_2i}{t_2}}\\
&=\left(\frac{at}{bl}-\rat{\frac{ar}{bl}}\right)\frac{l(t_2-1)}{2t_2}
+\sum_{i=0}^{\scriptscriptstyle\rat{\gauss{\frac{ar}{b}} /l }l}\rat{\frac{s_2i}{t_2}},
\end{split}\end{equation*}
and similarly
\begin{equation*}
\sum_{i=0}^{\gauss{at/b}}\rat{-\frac{s_1i}{t_1}}
=\left(\frac{at}{bl}-\rat{\frac{ar}{bl}}\right)\frac{l(t_1-1)}{2t_1}
+\sum_{i=0}^{\scriptscriptstyle\rat{\gauss{\frac{ar}{b}} /l }l}\rat{1-\frac{s_1i}{t_1}}.
\end{equation*}
After some elementary algebra, \eqref{eq:2d-theorem} expands to the claim. 
%}}}
\end{proof}

In particular, the theorem shows that (as shown in Section \ref{sec:rat}) $\frac{b}{a}$ is a period of $\Grat_1(T,\cdot)$,
$\Grat_1(T,\cdot)$ is piecewise linear,
and that $\frac{b\lcm{t_1,t_2}}{a}$ is a period of $\Grat_0(T,r)$. Furthermore, $\Grat_0(T,r)$ is piecewise quadratic, and 
the pieces are equal up to a constant depending only on $k$ (see Figure~\ref{g01pic}).
\begin{figure}[ht]\centering
\mbox{}\hfill\includegraphics{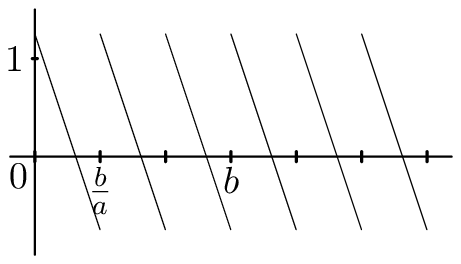}\hfill\includegraphics{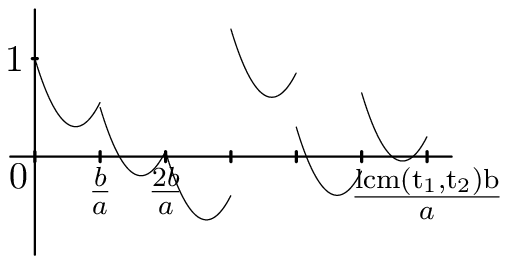}\hfill\mbox{}
\caption{Continuous $\Grat_1(P,r)$ and $\Grat_0(P,r)$}\label{g01pic}
\end{figure}
%}}}

\begin{corollary}\label{g1_corollary}%{{{
Let $T$ be as in Theorem \ref{thm_2d_triangle}, $g_i=\conv\left(
\left(\begin{matrix}0\\0\end{matrix}\right),
\left(\begin{matrix}\frac{a}{b}\frac{s_i}{t_i}\\ \frac{a}{b}\end{matrix}\right)\right)$,
for $i=1,2$.
Then $|\Grat_1(T,r)|\leq \Grat_1(T,0)$ for all $0\leq r<b\lcm{t_1,t_2}$.
More precisely, $\Grat_1(T,r)\geq -\Grat_1(T,0)+\Grat_1(g_1,r)+\Grat_1(g_2,r)$.
%}}}
\end{corollary}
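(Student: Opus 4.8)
The plan is to reduce the statement to the explicit formula for $\Grat_1(T,\cdot)$ from Theorem~\ref{thm_2d_triangle}(ii) together with a short direct computation of the (constant) leading coefficients $\Grat_1(g_i,\cdot)$ of the two edges of $T$ through the origin.

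First I would compute $\Grat_1(g_i,r)$ for $i=1,2$. The segment $g_i$ runs from $\binom00$ to $v_i=\frac ab\binom{s_i/t_i}{1}=\frac{a}{bt_i}\binom{s_i}{t_i}$, and since $\gcd(s_i,t_i)=1$ the vector $\binom{s_i}{t_i}$ is primitive; hence the lattice points on the ray $\R_{\ge0}\binom{s_i}{t_i}$ are exactly the points $m\binom{s_i}{t_i}$, $m\in\Z_{\ge0}$. For $r\in\Q_{\ge0}$ this gives
\[
\Grat(g_i,r)=\#(rg_i\cap\Z^2)=\gauss{\tfrac{ar}{bt_i}}+1=\tfrac{a}{bt_i}\,r+\bigl(1-\rat{\tfrac{ar}{bt_i}}\bigr),
\]
and comparing with the quasi-polynomial form $\Grat(g_i,r)=\Grat_1(g_i,r)r+\Grat_0(g_i,r)$ of Theorem~\ref{thm:rat_dilations} (the coefficient of $r$ being uniquely determined, since $1-\rat{\frac{ar}{bt_i}}$ is periodic) yields $\Grat_1(g_i,r)=\frac{a}{bt_i}$. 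Consequently $\Grat_1(g_1,r)+\Grat_1(g_2,r)=\frac ab\cdot\frac{t_1+t_2}{t_1t_2}$.

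Next I would substitute into the formula from Theorem~\ref{thm_2d_triangle}(ii). Writing $\delta:=\frac{s_2}{t_2}-\frac{s_1}{t_1}>0$, that formula reads $\Grat_1(T,r)=\frac ab\bigl(\frac{t_1+t_2}{2t_1t_2}-(\rat{\frac{ar}{b}}-\frac12)\delta\bigr)$, so in particular $\Grat_1(T,0)=\frac ab\bigl(\frac{t_1+t_2}{2t_1t_2}+\frac12\delta\bigr)$, whence
\[
-\Grat_1(T,0)+\Grat_1(g_1,r)+\Grat_1(g_2,r)=\tfrac ab\Bigl(\tfrac{t_1+t_2}{2t_1t_2}-\tfrac12\delta\Bigr).
\]
Subtracting, $\Grat_1(T,r)-\bigl(-\Grat_1(T,0)+\Grat_1(g_1,r)+\Grat_1(g_2,r)\bigr)=\frac ab\delta\bigl(1-\rat{\frac{ar}{b}}\bigr)$ and $\Grat_1(T,0)-\Grat_1(T,r)=\frac ab\delta\rat{\frac{ar}{b}}$. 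Since $\rat{\frac{ar}{b}}\in[0,1)$ and $\frac ab,\delta>0$, the first quantity is $>0$ and the second is $\ge0$, giving
\[
-\Grat_1(T,0)+\Grat_1(g_1,r)+\Grat_1(g_2,r)\;<\;\Grat_1(T,r)\;\le\;\Grat_1(T,0),
\]
which is (even slightly stronger than) the refined inequality. Finally, because $\Grat_1(g_i,r)=\frac{a}{bt_i}>0$, the left-hand side above is $\ge-\Grat_1(T,0)$, so $-\Grat_1(T,0)<\Grat_1(T,r)\le\Grat_1(T,0)$ with $\Grat_1(T,0)>0$, and hence $|\Grat_1(T,r)|\le\Grat_1(T,0)$ for all $r\ge0$, in particular on $[0,b\lcm{t_1,t_2})$.

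There is no genuinely hard step here. The only point requiring care is the half-open nature of the fractional part — $\rat{\cdot}$ attains the value $0$ but not $1$ — which, together with $\delta>0$, is exactly what pins down the direction of the strict inequality on the left and the sign of the correction term $-(\rat{\frac{ar}{b}}-\frac12)\delta$. I would also note in passing that the restriction $0\le r<b\lcm{t_1,t_2}$ is inessential for the argument, since $\Grat_1(T,\cdot)$ has period $\frac ba$ and the estimate holds for every $r\in\Q_{\ge0}$.
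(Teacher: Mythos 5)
Your proposal is correct and follows essentially the same route as the paper: compute $\Grat_1(g_i,r)=\frac{a}{bt_i}$, substitute into the explicit formula of Theorem~\ref{thm_2d_triangle}(ii), and bound the term $-\left(\rat{\frac{ar}{b}}-\frac12\right)\left(\frac{s_2}{t_2}-\frac{s_1}{t_1}\right)$ using $0\le\rat{\frac{ar}{b}}<1$ and $\frac{s_2}{t_2}-\frac{s_1}{t_1}>0$. Your additional remarks (the explicit lattice-point count on $rg_i$, the strictness of the lower inequality, and the observation that the range restriction on $r$ is immaterial by periodicity) are all accurate refinements of the same argument.
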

\begin{proof}%{{{
It is $\Grat_2(g_i,r)=0$, $\Grat_1(g_i,r)= \frac{a}{bt_i}$, and $\Grat_0(g_i,r)=1-\left(\frac{ar}{bt_i}-\gauss{\frac{ar}{bt_i}}\right)$
for all $r\in\Q$, $i=1,2$.
Since $\frac{s_2}{t_2}-\frac{s_1}{t_1}>0$ we get
\begin{equation*}\begin{split}
\Grat_1(T,r)&=\frac{a}{b}\left(\frac{t_1+t_2}{2t_1t_2}-\left(\rat{\frac{ar}{b}}-\frac12\right)
\left(\frac{s_2}{t_2}-\frac{s_1}{t_1}\right)\right)\\
&\leq \frac{a}{b}\left(\frac{t_1+t_2}{2t_1t_2}+\frac12\left(\frac{s_2}{t_2}-\frac{s_1}{t_1}\right)\right)=\Grat_1(T,0).
\end{split}\end{equation*}
Furthermore
\begin{equation*}\begin{split}
\Grat_1(T,r)&=\frac{a}{b}\left(\frac{t_1+t_2}{2t_1t_2}-\rat{\frac{ar}{b}}
\left(\frac{s_2}{t_2}-\frac{s_1}{t_1}\right)
+\frac12\left(\frac{s_2}{t_2}-\frac{s_1}{t_1}\right)\right)\\
&\geq\frac{a}{b}\left(-\frac{t_1+t_2}{2t_1t_2}
-\frac12\left(\frac{s_2}{t_2}-\frac{s_1}{t_1}\right)\right)+\frac{a}{b}\left(\frac{t_1+t_2}{t_1t_2}\right)\\
&=-\Grat_1(T,0)+\Grat_1(g_1,r)+\Grat_1(g_2,r).\qedhere
\end{split}\end{equation*}
%}}}
\end{proof}

\begin{remark}%{{{
An analogous statement of Corollary \ref{g1_corollary} for $\Grat_0(T,\cdot)$ is not true. To see this, we
consider the triangles
\[T_\alpha=\conv\left(\binom{0}{0},\binom{\frac{\alpha-1}{\alpha}}{1},\binom{\frac{\alpha+1}{\alpha}}{1}\right),\quad \alpha\in\Z.\]
Together with Theorem \ref{thm_2d_triangle}, we get, for $m\in\Z_{\geq 0}$, $0\leq k<\alpha$ and $0\leq \tilde{r}<1$, that
\[\Grat_0(T_\alpha,m\alpha+k+\tilde{r})=\frac{1}{\alpha}\left(k(\alpha-k-2)+\tilde{r}^2-2\tilde{r}\right)+1.\]
Then for $k\sim \alpha/2$,
\[\Grat_0(T_\alpha,m\alpha+k+\tilde{r})\sim\frac{\alpha}{4}+\frac{1}{\alpha}\left(\tilde{r}^2-2\tilde{r}\right)\geq\frac{\alpha}{4}-\frac{1}{\alpha}\]
which tends to infinity, when $\alpha$ goes to infinity, but $\Grat_0(T,0)=1$.
%}}}
\end{remark}
Now we can deduce the inequality $|G_1(P,r)|\leq G_1(P,0)$ for arbitrary rational polygons:
\begin{proof}[Proof of Theorem \ref{thm:dim2}]%{{{
We first consider only integral dilation factors, that is, we show that
\begin{equation}\label{eq::g1}
|\G_1(P,k)|\leq \G_1(P,0)\quad\text{for all }k\in\Z.
\end{equation}
For this, we use several steps:\\
\emph{1. An integral version of Corollary \ref{g1_corollary} holds true for $G_1(P,k)$ for arbitrary triangles $P$ with at least one integral vertex.}
This is true since $G_1(P,k)$ is invariant under translations and unimodular transformations.\\
\emph{2. \eqref{eq::g1} is true for every rational polygon $P$ with one integral point in its interior.}
Let $P$ be an arbitrary $2$-dimensional polygon with $m$ vertices and let $z$ be an integral point in the interior
of $P$. We consider the triangulation of $P$ as given in Figure \ref{6gon}. 
\begin{figure}[ht]\centering
\includegraphics{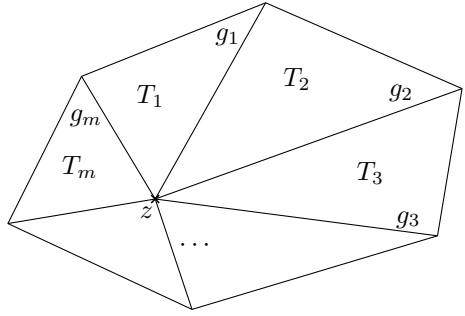}
\caption{Triangulation of $P$}\label{6gon}
\end{figure}

Here let $g_0:=g_m$. Then for all $k\in\Z$,
\begin{equation*}
\G(P,k)=\sum_{i=1}^m\G(T_i,k)-\sum_{i=1}^m\G(g_i,k)+1.
\end{equation*}
Thus, expanding all Ehrhart polynomials yields
\begin{equation*}
\G_1(P,k)
=\sum_{i=1}^m\left(\G_1(T_i,k)-\frac12\G_1(g_{i-1},k)-\frac12\G_1(g_i,k)\right).
\end{equation*}
Thus, step 1 implies that $\G_1(P,k)\leq \G_1(P,0)$, and 
\begin{equation*}\begin{split}
\G_1(P,k)&=\sum_{i=1}^m\left(\G_1(T_i,k)-\frac12\G_1(g_{i-1},k)-\frac12\G_1(g_i,k)\right)\\
&\geq\sum_{i=1}^m\left(-\G_1(T_i,0)+\frac12\G_1(g_{i-1},k)+\frac12\G_1(g_i,k)\right)\\
&= -\G_1(P,0).
\end{split}\end{equation*}
\emph{3. \eqref{eq::g1} is true for arbitrary rational polygons $P$.}
Let $l\in\Z_{\geq 0}$ such that $(l\d(P)+1)P$ contains at least one integral point in its interior.
Then for all $k\in\Z_{\geq 0}$,
\begin{equation*}
\G_1((l\d(P)+1)P,k)=\G_1(P,k)(l\d(P)+1).
\end{equation*}
Thus, from step c2 it follows that $|\G_i(P,k)|\leq \G_i(P,0)$.\\[6pt]
Finally we consider arbitrary rational dilation factors. Let $P$ be an arbitrary rational polygon and let 
$r=\frac{p}{q}\in\Q_{\geq 0}$ with $r\leq d(P)$, where $p\in\Z_{\geq  0}$ and $q\in\Z_{>0}$. Then, 
by Lemma \ref{lem:rat_dilations}, 
$\Grat_i(P,r)=\G_i\left(\frac1qP,p\right)q^i$. Hence, \eqref{eq::g1} implies that
$|\Grat_i(P,r)|\leq \Grat_i(P,0)$.
\end{proof}
%}}}
%%%%%%%%%%%%%%%%%%%%%%%%%%%%%%%%%%%%%%%%%%%%%%%%%%%%%%%%%%%%%%%%%%%%%%%%%%%%%%%%%%%%%%%%%%%%%%%%%%%%%%%%%%%%%%%

\section*{Acknowledgements}%{{{
\sloppypar
The author would like to thank Martin Henk for helpful discussions and Matthias Beck
and the reviewers for helpful comments improving the manuscript.
%}}}
%\bibliographystyle{amsplain} 
%\bibliographystyle{model1b-num-names} 
%\bibliography{ehrhart_etc}

\providecommand{\bysame}{\leavevmode\hbox to3em{\hrulefill}\thinspace}
\providecommand{\MR}{\relax\ifhmode\unskip\space\fi MR }
% \MRhref is called by the amsart/book/proc definition of \MR.
\providecommand{\MRhref}[2]{%
  \href{http://www.ams.org/mathscinet-getitem?mr=#1}{#2}
}
\providecommand{\href}[2]{#2}

\end{document}